\newcommand{\leqnomode}{\tagsleft@true\let\veqno\@@leqno}
\newcommand{\reqnomode}{\tagsleft@false\let\veqno\@@eqno}
\newcommand{\mylabel}[2]{#2\def\@currentlabel{#2}\label{#1}}
\setlist[description]{leftmargin=*}
\newcolumntype{L}{>{\RaggedRight\arraybackslash}X}
\definecolor{newblue}{RGB}{0,102,204}
\definecolor{newred}{RGB}{206,32,41}
\newtheorem{theorem}{Theorem}[section]
\newtheorem{proposition}[theorem]{Proposition}
\newtheorem{corollary}[theorem]{Corollary}
\newtheorem{question}[theorem]{Question}
\newtheorem*{maintheorem}{Main Theorem}
\theoremstyle{definition}
\newtheorem{remark}[theorem]{Remark}
\newtheorem{example}[theorem]{Example}
\newtheorem{definition}[theorem]{Definition}
\renewcommand{\setminus}{-}
\newcommand{\mZ}{\mathbb{Z}}
\renewcommand{\to}{\longrightarrow}
\renewcommand{\t}{\mathsf{t}}
\renewcommand{\r}{\mathsf{r}}
\newcommand{\z}{\mathsf{z}}
\renewcommand{\S}{\mathfrak{S}}
\renewcommand{\P}{\mathfrak{P}}
\lstdefinestyle{customc}{
  belowcaptionskip=1\baselineskip,
  breaklines=true,
 frame=L,
  xleftmargin=\parindent,
  language=GAP,
  showstringspaces=false,
  basicstyle=\tiny\ttfamily,
  keywordstyle=\bfseries\color{green!40!black},
  commentstyle=\itshape\color{purple!40!black},
  identifierstyle=\color{blue},
  stringstyle=\color{olive},
}
\definecolor{codegreen}{rgb}{0,0.6,0}
\definecolor{codegray}{rgb}{0.5,0.5,0.5}
\definecolor{codepurple}{rgb}{0.58,0,0.82}
\definecolor{backcolour}{rgb}{0.95,0.95,0.92}
\lstdefinestyle{mystyle}{
    backgroundcolor=\color{backcolour},   
    commentstyle=\color{codegreen},
    keywordstyle=\color{magenta},
    numberstyle=\tiny\color{codegray},
    stringstyle=\color{codepurple},
    basicstyle=\ttfamily\tiny,
    breakatwhitespace=false,         
    breaklines=true,                 
    captionpos=t,                    
    keepspaces=true,                 
    numbers=left,                    
    numbersep=5pt,                  
    showspaces=false,                
    showstringspaces=false,
    showtabs=false,                  
    tabsize=2
}
\title[On finite quotients of pure surface braid groups]{On finite quotients of pure surface braid groups \\ having order at most $127$}
\date{}
\author[Francesco Polizzi]{Francesco Polizzi $^{*}$}
\address{Dipartimento di Matematica e Applicazioni ``Renato Caccioppoli''
  \newline\indent
  Universit\`a degli Studi di Napoli Federico II
  \newline\indent
Via Cintia, Monte S. Angelo
  \newline\indent
  I-80126  Napoli, Italy}
\email{francesco.polizzi@unina.it}
\author[Pietro Sabatino]{Pietro Sabatino}
\address{Institute for High Performance Computing and Networking (ICAR-CNR)
    \newline\indent
via P. Bucci 8/9C
\newline\indent
87036 Rende (CS), Italy}
\email{pietro.sabatino@icar.cnr.it}
\thanks{\emph{2010 Mathematics Subject
Classification.} 20F36, 20D15, 05E16}
\keywords{Surface braid groups, Finite groups, Complex surfaces}
\begin{document}


\begin{abstract}
Let $\Sigma_b$ be a compact Riemann surface of genus $b \geq 2$ and let $\mathsf{P}_2(\Sigma_b)=\pi_1(\Sigma_b \times \Sigma_b - \Delta)$ be the corresponding pure braid group on two strands. A finite quotient $G$ of $\mathsf{P}_2(\Sigma_b)$ is called \emph{admissible} if the quotient homomorphism $\varphi \colon \mathsf{P}_2(\Sigma_b) \to G$  does not factor through $\pi_1(\Sigma_b \times \Sigma_b)$. In this article we classify all admissible quotients of $\mathsf{P}_2(\Sigma_b)$ such that $|G| \leq 127$, also reviewing some of our previous work on this subject.
\end{abstract}

\maketitle




\section{Introduction} \label{sec:intro}
Let $\Sigma_b$ be a compact Riemann surface of genus $b$. We write $\mathsf{P}_2(\Sigma_b)$ for the pure braid group on two strands of $\Sigma_b$, defined as the fundamental group $\pi_1(\Sigma_b \times \Sigma_b - \Delta)$, where $\Delta \subset \Sigma_b \times \Sigma_b$ is the diagonal. Surface braid groups are finitely presented for every choice of $b$, and several presentations appear in the literature, see for instance \cite{Bir69, Scott70, Bel04, GG04}; they play a relevant role in low-dimensional topology and related fields, bridging ideas from algebraic topology, geometric topology, and algebraic geometry.

Surface braid groups $\mathsf{P}_2(\Sigma_b)$ are residually $p$-finite for all primes $p$, see \cite[pp. 1481--1490]{BarBel09}. The present work, together with our previous papers on this topic, aims at an explicit classification of some of their finite quotients, motivated by geometric considerations.

Let $A_{12}$ denote the homotopy class of a geometric braid that loops once around the diagonal. A surjective homomorphism 
\begin{equation} \label{eq:admissible-epi}   
\varphi \colon \mathsf{P}_2(\Sigma_b) \to G
\end{equation}
onto a finite group $G$, such that $\varphi(A_{12})$ has order $n \geq 2$, will be called an \emph{admissible epimorphism}. We will also say that the finite group $G$ is an \emph{admissible} (\emph{pure}) \emph{braid quotient of tipe} $(b, \, n)$. Equivalently, $\varphi$ is admissible if it does not factor through $\pi_1(\Sigma_b \times \Sigma_b)$. 

In the sequel, we always suppose $b \geq 2$. Under this assumption, a series of recent works \cite{CaPol19, Pol22, PolSab22, PolSab23, PolSab24} has clarified the role of admissible quotients of $\mathsf{P}_2(\Sigma_b)$ in algebraic geometry, and in particular in the theory of double Kodaira surfaces. We refer the reader to these papers for further details, and to \cite{Rol10, Cat17, LLR17} for general background on double Kodaira surfaces. 

In the case $b=2$, admissible quotients of order at most $32$ were classified in \cite{PolSab22}, and this classification was subsequently extended up to order $64$ in \cite{PolSab24}. In the present paper, which may be viewed both as a review and as a continuation of these works, we provide a complete classification of admissible quotients with $|G| \leq 127$, for every genus $b \geq 2$. Our main result is the following, see Theorem \ref{thm:main}.
\begin{maintheorem} \label{thm:A}
Let $G$ be an admissible braid quotient of type $(b, \, n)$. If $|G| \leq 127$, then necessarily  $(b, \, n)=(2, \, 2)$, and all the occurrences are listed in Table \emph{\ref{tab:admissible_127_intro}} below.
\begin{table}[H]  
    \scriptsize
    \centering
    \begin{tabularx}{0.75\linewidth}{@{}cccc@{}} 
\toprule
& &  $\mathrm{Number} \, \, \mathrm{of}$ &   $\mathrm{Total} \, \, \mathrm{number} \, \, \mathrm{of}$ \\
&&  $\mathrm{admissible} \, \, \mathrm{epimorphisms}$ &   $\mathrm{admissible} \, \, \mathrm{epimorphisms}$  \\

 &   &  $\varphi \colon \mathsf{P}_2(\Sigma_2) \to G $  &  $\varphi \colon \mathsf{P}_2(\Sigma_2) \to G $ \\
      $\mathrm{IdSmallGroup}(G)$ & $\mathrm{Is} \, \, G\, \,  \, \mathrm{monolithic}?$ &  
      $ \mathrm{up} \, \, \mathrm{ to} \, \, \operatorname{Aut}(G)$ & \\
      \toprule
$G(32, \, 49)$ & $\mathrm{Yes}$  & $1920$  & $2211840$\\    
$G(32, \, 50)$ & $\mathrm{Yes}$  & $1152$ & $2211840$ \\       
$G(64, \, 199)$ & $\mathrm{No}$  & $138240$ & $566231040$ \\  
$G(64, \, 200)$ & $\mathrm{No}$  & $46080$ & $566231040$\\ 
$G(64, \, 201)$ & $\mathrm{No}$ & $184320$ & $566231040$ \\ 
$G(64, \, 249)$ & $\mathrm{Yes}$  & $368640$ & $566231040$ \\ 
$G(64, \, 264)$ & $\mathrm{No}$  & $14400$ &  $530841600$\\ 
$G(64, \, 265)$ & $\mathrm{No}$  & $8640$ & $530841600$\\ 
$G(64, \, 266)$ & $\mathrm{Yes}$  & $23040$ & $530841600$\\ 
$G(96, \, 224)$ & $\mathrm{No}$ & $6297600$ & $14509670400$\\
$G(96, \, 225)$ & $\mathrm{No}$ & $3778560$ &  $14509670400$ \\
\bottomrule
\end{tabularx} 
\caption{Admissible braid quotients of order at most $127$}
\label{tab:admissible_127_intro} 
 \end{table} 
\end{maintheorem}
Our Main Theorem shows that admissible braid quotients are extremely rare: among the $1036$ non-abelian groups of order at most $127$, only $11$ are admissible braid quotients.\footnote{Without the admissibility condition, many more quotients occur, since $\pi_1(\Sigma_b \times \Sigma_b)=\pi_1(\Sigma_b)\times \pi_1(\Sigma_b)$; for instance, all $4b$-generated abelian groups arise.}
Although the cases of orders $32$ and $64$ were already studied in previous work as admissible quotients of $\mathsf{P}_2(\Sigma_2)$, the present paper shows that none of these groups occurs as an admissible quotient of $\mathsf{P}_2(\Sigma_b)$ for any $b\ge 3$. Thus, every admissible quotient of $\mathsf{P}_2(\Sigma_b)$ with $b\ge 3$ satisfies $|G|\ge 128$ (Corollary~\ref{cor:b=3_128}). On the other hand, it is known from~\cite{CaPol19} that the two extraspecial groups of order $2^7=128$ are admissible quotients of $\mathsf{P}_2(\Sigma_3)$, showing that this bound is sharp. We also note that the two admissible quotients of order $96$ appear here for the first time.

The proof of the Main Theorem combines methods from combinatorial group theory with explicit computations performed using the computer algebra system \verb|GAP4|~\cite{GAP4}. To this end, we implemented a dedicated procedure, \verb|CheckStructures|. All relevant scripts are available in a public GitHub repository~\cite{GAP4DDKS}, and selected excerpts of the \verb|GAP4| code are included in the paper to illustrate the computational methods involved.

\bigskip

$\mathbf{Notation \; and \; conventions}$.
The order of a finite group $G$ is denoted by $|G|$. If $x
    \in G$, the order of
    $x$ is denoted by $o(x)$. If $S= \{s_1, \ldots, s_n \} \subset G$, the subgroup generated
    by $S$ is denoted by $\langle S \rangle=\langle s_1,\ldots,
    s_n \rangle$. If $x, \, y \in G$, their commutator is defined as
    $[x,\, y]=xyx^{-1}y^{-1}$.  The commutator subgroup of $G$ is denoted by $[G, \, G]$,
    the center
    of $G$ by  $Z(G)$. If $N$ is a normal subgroup of $G$ and $g \in G$, we denote by
  $\bar{g}$
    the image of $g$ in the quotient group $G/N$.  We denote by $\mZ_n$ the cyclic group of order $n$.
    $\mathrm{IdSmallGroup}(G)$ indicates the label of the group
    $G$ in
    the  \verb|GAP4| database of small groups. For instance,
    $\mathrm{IdSmallGroup}(\mathbb{Z}_6)=G(6, \, 2)$ means
    that $\mathbb{Z}_6$
   is in the second  position of the 
    list of groups of order $6$.

\section{Group-theoretical preliminaries: CCT-groups and monolithic groups} \label{sec:group-prel}

\subsection{CCT-groups}
This subsection relies  on \cite[Section 2]{PolSab22}, to which we refer the reader for more details. Further references are  \cite{Suz61, Schm70, Reb71, Rocke73, Wu98}.

\begin{definition} \label{def:CCT}
  A non-abelian finite group $G$ is said to be a
  \emph{center commutative-transitive group}
  $($or a CCT-group, for short$)$ if commutativity is a transitive
  relation on the set on non-central elements of $G$. In other words, if $x,
  \, y, \, z	\in G \setminus Z(G)$  and $[x, \, y]=[y, \, z]=1$, 
  then $[x, \, z]=1$. Equivalently, $G$ is a CCT-group if the centralizer $C_G(x)$ is abelian  for every non-central element $x \in G$. 
\end{definition}

\begin{proposition} \label{prop:small-CCT}
  Let $G$ be a non-abelian finite group.
  \begin{itemize}
    \item[$\boldsymbol{(1)}$] If $|G|$ is the product of at
      most three prime
      factors $($non necessarily distinct$)$, then $G$
      is a \emph{CCT}-group.
    \item[$\boldsymbol{(2)}$] If $|G|=p^4$, with $p$ prime,
      then $G$ is a
      \emph{CCT}-group.
    \item[$\boldsymbol{(3)}$] If $G$ contains an abelian normal
      subgroup of
      prime index, then $G$ is a \emph{CCT}-group.
  \end{itemize}
\end{proposition}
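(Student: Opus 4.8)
The plan is to work throughout with the reformulation recorded immediately after Definition~\ref{def:CCT}: a non-abelian finite group $G$ is a CCT-group precisely when $C_G(x)$ is abelian for every non-central $x \in G$. The observation that unifies all three parts is that $x$ always lies in the centre of its own centraliser, so that $Z(C_G(x)) \neq \{1\}$ whenever $x \neq 1$, and moreover $Z(G) \subseteq C_G(x)$ forces $Z(G) \subseteq Z(C_G(x))$. Hence in each case I would fix a non-central element $x$, set $C := C_G(x)$ (a proper subgroup, since $x \notin Z(G)$), and show that the constraint on $|G|$ together with the non-triviality of $Z(C)$ forces $C$ to be abelian.

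For part~$\boldsymbol{(1)}$, since a proper divisor of a number with at most three prime factors (counted with multiplicity) has at most two, the proper subgroup $C$ has order a product of at most two primes. I would then invoke the structure of such small groups: those of order $1$, $p$, or $p^2$ are abelian, while the only non-abelian option, a group of order $pq$ with $p \neq q$, has trivial centre. As $Z(C) \neq \{1\}$, the subgroup $C$ cannot be of this exceptional type, and is therefore abelian. For part~$\boldsymbol{(2)}$, with $|G| = p^4$ the same reduction leaves only the case $|C| = p^3$ with $C$ non-abelian to exclude (orders $p$ and $p^2$ being automatically abelian). Here a non-abelian group of order $p^3$ has centre of order $p$, so $x \in Z(C)$ gives $Z(C) = \langle x \rangle$; but then $Z(G) \subseteq Z(C) = \langle x \rangle$ together with $Z(G) \neq \{1\}$ yields $x \in Z(G)$, contradicting the choice of $x$. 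Thus $C$ is abelian in this case as well.

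For part~$\boldsymbol{(3)}$, let $A \trianglelefteq G$ be abelian of prime index, hence a maximal subgroup. If $x \in A$ then $A \subseteq C$, and maximality forces $C = A$ or $C = G$; the second alternative would place $x$ in $Z(G)$, so $C = A$ is abelian. If $x \notin A$ then $G = \langle A, x \rangle$, and reducing any $g \in C$ modulo $A$ shows that $C = \langle (C \cap A),\, x \rangle$; since $C \cap A$ is abelian and is centralised by $x$ (every element of $C$ commutes with $x$ by definition), the group $C$ is abelian. I expect the only genuine subtlety to lie in part~$\boldsymbol{(2)}$, where merely bounding $|C|$ does not suffice and one must combine the known centre structure of groups of order $p^3$ with the containment $Z(G) \subseteq Z(C)$ to rule out a non-abelian centraliser; the other two parts reduce cleanly to standard facts about groups whose order is a product of at most two primes and to the maximality of $A$.
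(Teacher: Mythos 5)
Your proof is correct, and all three parts are sound: the centralizer reformulation, the observation that $x \in Z(C_G(x))$ excludes a non-abelian centralizer of order $pq$ in part $\boldsymbol{(1)}$, the combination of $Z(G) \subseteq Z(C_G(x))$ with the order-$p$ centre of a non-abelian group of order $p^3$ in part $\boldsymbol{(2)}$, and the maximality of $A$ together with the decomposition $C_G(x) = (C_G(x) \cap A)\langle x \rangle$ in part $\boldsymbol{(3)}$. Note that the paper itself offers no proof of Proposition~\ref{prop:small-CCT}, deferring instead to \cite[Section 2]{PolSab22}, and the argument given there follows essentially the same route as yours.
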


Non-CCT groups of order at most $32$ were classified in \cite{PolSab22}. They 
consist of the symmetric group $\mathsf{S}_4$ and the two extra-special groups of 
order $2^5 = 32$, namely $\mathsf{H}_5(\mathbb{Z}_2)=G(32, \,49)$ and $\mathsf{G}_5(\mathbb{Z}_2)=G(32, \, 50)$; see  Example \ref{ex:extra-special-is-monolithic} for the definition of extra-special group.
 This classification was later extended to groups of order at most $64$ in \cite{PolSab24}. In the present work, we need to consider the case of groups of order at most $127$. First of all, looking at Proposition \ref{prop:small-CCT}, we get 

\begin{proposition} \label{prop:CCT-order-less-127}
Let $G$ be a non-abelian group with $65 \leq |G| \leq 127$. If $G$ is not a \emph{CCT}-group, then 
\begin{equation}
|G|\in \{ 72, \, 80, \, 84, \, 88, \, 90, \, 96, \, 100, \, 104, \, 108, \, 112, \, 120, \, 126\}.
\end{equation}
\end{proposition}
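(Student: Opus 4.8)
The plan is to derive the numerical conclusion directly from the three sufficient conditions for being a CCT-group collected in Proposition \ref{prop:small-CCT}. The strategy is a \emph{contrapositive sieve}: if $G$ is non-abelian with $65 \le |G| \le 127$ and $G$ is \emph{not} a CCT-group, then all three conditions of Proposition \ref{prop:small-CCT} must fail for $G$, and this forces $|G|$ to lie in the short list displayed. So first I would run through every integer $N$ in the range $65 \le N \le 127$ and eliminate those values of $N$ for which \emph{every} non-abelian group of order $N$ is automatically a CCT-group by one of the three criteria.

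Concretely, the failure of condition $\boldsymbol{(1)}$ means $|G|$ is not a product of at most three (not necessarily distinct) primes; equivalently, the prime factorization of $N$ must have at least four prime factors counted with multiplicity, i.e. $\Omega(N) \ge 4$. So the first step is to list the integers $N$ in $[65,127]$ with $\Omega(N) \ge 4$: these are exactly the numbers divisible by a product of four primes, and a short factorization check produces the candidate set $\{72,80,84,88,90,96,100,104,108,112,120,126\}$ together with the prime powers $p^4$ that happen to fall in range. Next, I would invoke condition $\boldsymbol{(2)}$ to discard the fourth-prime-power cases: the only $p^4$ in the interval $[65,127]$ is $81 = 3^4$, and since $\Omega(81)=4$ it survives step one but is immediately removed because every group of order $p^4$ is a CCT-group. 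This is why $81$ does not appear in the final list even though it has four prime factors.

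The remaining work is to confirm that no further value can be excluded by condition $\boldsymbol{(3)}$ and that the twelve listed values genuinely \emph{can} host a non-CCT group, or at least are not ruled out by the cheap criteria—so that the list is exactly the set of \emph{potential} orders of non-CCT groups, which is all the statement asserts. Here condition $\boldsymbol{(3)}$ is a per-group rather than per-order test: a group of order $N$ with an abelian normal subgroup of prime index is CCT, but since $N$ can carry several groups, this criterion cannot eliminate a whole order unless it applies to every group of that order. Thus for the values surviving steps one and two I would simply note that none of them is excluded at the level of orders, and that the displayed list therefore collects exactly the orders not eliminated by Proposition \ref{prop:small-CCT}(1)--(2). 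The bookkeeping is elementary, so the only real content is the arithmetic of computing $\Omega(N)$ across the range.

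The step I expect to be the main (though mild) obstacle is making sure the factorization sieve is exhaustive and that I have correctly separated the $p^4$ case from the genuine four-prime-factor case—that is, confirming that $81$ is the unique prime power to worry about and that every other $N$ with $\Omega(N)\ge 4$ in range is one of the twelve listed. A careful tabulation of $\Omega(N)$ for $65 \le N \le 127$ settles this at once, and I would present the outcome as a single list rather than grinding through each integer. Since the proposition only claims a necessary restriction on $|G|$ (an \emph{if not CCT, then} statement), no construction of explicit non-CCT groups is required at this stage; the sharper question of which of these orders actually realize a non-CCT group is deferred to the case-by-case analysis of later sections.
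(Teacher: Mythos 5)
Your proof is correct and follows essentially the same route as the paper, which deduces the list directly from Proposition \ref{prop:small-CCT}: orders with at most three prime factors (counted with multiplicity) are excluded by part $\boldsymbol{(1)}$, and the unique remaining fourth prime power $81 = 3^4$ is excluded by part $\boldsymbol{(2)}$, leaving exactly the twelve stated values. Your additional observation that part $\boldsymbol{(3)}$ is a per-group criterion and plays no role here, and that the proposition asserts only a necessary condition on $|G|$, matches the paper's intent precisely.
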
  

We now examine each of these cases, using the computer algebra system \verb|GAP4|, see \cite{GAP4}.

\begin{proposition} \label{prop:non-CCT}

The non-abelian groups $G$ of order $65 \leq |G| \leq 127$ which are not $\operatorname{CCT}$-groups are the following.
\begin{itemize}
\item The groups of the form $G(72, \, t)$, with
    \footnotesize
\begin{equation}
t \in \{15, \, 20, \, 21, \, 22, \,  23, \, 24, \, 40, \, 41, \, 42, \, 43, \, 44, \, 46\}.
\end{equation}
\normalsize
\item The groups of the form $G(80, \, t)$, where 
    \footnotesize
\begin{equation}
t \in \{15, \, 16, \, 17, \, 18, \, 29, \, 31, \, 33, \, 34, \, 39, \, 40, \, 41, \, 42\}.
\end{equation}
\normalsize
\item The group $G(84, \, 8)$.
\item The groups of the form $G(96, \, t)$, where 
    \footnotesize
\begin{equation}
\begin{split}
t \in \{& 3, \, 13, \, 14, \, 15, \, 16, \, 17, \, 29, \, 30, \, 31, \, 32, \, 33, \, 34, \, 35, \, 36, \, 39, \, 40, \, 41, \, 42, \, 43, \, 44, \, 49, \, 50,\\
& 51, \, 64, \, 65, \, 70, \, 71, \, 72, \, 84, \, 85, \, 86, \, 87, \, 88, \, 89, \, 90, \, 91, \, 92, \, 93, \, 94, \, 95, \, 96, \, 97, \, 98, \, 99, \\
& 101, \, 102, \, 103, \, 104, \, 105, \, 113, \, 114, \, 115, \, 116, \, 117, \, 118, \, 119, \, 120, \, 121, \, 122, \, 123, \, 124, \\
& 125, \, 126, \, 138, \, 139, \, 140, \, 141, \, 142, \, 143, \, 144, \, 145, \,  146, \, 147, \, 148, \, 149, \, 150, \, 151, \, 152, \\\
& 153, \,  154, \, 155, \, 156, \, 157, \, 158, \,  183, \, 184, \, 185, \, 186, \, 187, \, 190, \, 191, \, 
193, \, 194, \, 195, \, 197,\\
& 199, \, 201, \, 202, \, 203, \, 204, \, 209, \, 210, \, 211, \, 212, \, 213, \, 214, \, 215, \, 
216, \, 217, \, 224,  \, 225, \, 226, \, 227\}.
\end{split}
\end{equation}
\normalsize

\item The groups $G(100, \, 10)$ and $G(100, \, 13)$. 
\item The groups of the form $G(108, \, t)$, where 
    \footnotesize
\begin{equation}
t \in \{8, \, 9, \,  16, \, 17, \, 25, \, 26, \, 37, \,   38,\,  39, \,  40\}.
\end{equation}
\normalsize
\item The groups of the form  $G(112, \, t)$, where 
    \footnotesize
\begin{equation}
t \in \{14, \, 15, \, 16, \, 17, \, 31, \, 32, \, 33, \, 34\}.
\end{equation}
\normalsize
\item The groups of the form $G(120, \, t)$, where 
    \footnotesize
\begin{equation}
t \in \{7, \, 8, \, 9, \, 10, \, 11, \, 12, \, 13, \, 14, \, 34, \, 36, \, 37, \, 38, \, 39, \, 41, \, 42\}.
\end{equation}
\normalsize
\item  The groups $G(126, \, 8)$ and $G(126, \, 9)$. 
\end{itemize}
\end{proposition}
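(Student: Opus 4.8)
The plan is to begin from the reduction already supplied by Proposition \ref{prop:CCT-order-less-127}, which guarantees that any non-abelian, non-CCT group of order between $65$ and $127$ must have order in the set
\[
\{72, \, 80, \, 84, \, 88, \, 90, \, 96, \, 100, \, 104, \, 108, \, 112, \, 120, \, 126\}.
\]
It therefore suffices to inspect, one order at a time, every non-abelian group of these twelve orders and to decide which of them fail the CCT-property. For this decision I would rely on the characterization recalled immediately after Definition \ref{def:CCT}: a non-abelian finite group $G$ is a CCT-group if and only if the centralizer $C_G(x)$ is abelian for every non-central element $x \in G$.

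The verification is then a finite computation, which I would carry out in \verb|GAP4| using the SmallGroups library. For each of the twelve orders $n$ listed above, the routine enumerates all groups $G$ of order $n$ and discards the abelian ones. A useful preliminary reduction is provided by Proposition \ref{prop:small-CCT}$\boldsymbol{(3)}$: any group possessing an abelian normal subgroup of prime index is automatically a CCT-group and may be set aside at once. For each of the remaining groups the centralizer criterion is tested; since centralizers of conjugate elements are conjugate, and hence simultaneously abelian or non-abelian, it suffices to run the test over a set of representatives of the non-central conjugacy classes of $G$ rather than over all of $G \setminus Z(G)$, which keeps the computation tractable. A group is recorded as non-CCT precisely when some such representative has non-abelian centralizer, and collecting the labels $\mathrm{IdSmallGroup}(G)$ of the groups that fail the test reproduces exactly the lists in the statement. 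In particular, the orders $88$, $90$ and $104$ occur in Proposition \ref{prop:CCT-order-less-127} but yield no non-CCT group, which is why they are absent here.

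The main obstacle is one of scale rather than of principle. The order $96 = 2^5 \cdot 3$ alone accounts for $231$ isomorphism types of groups, while several of the remaining orders --- notably $108$, $112$ and $120$ --- contain dozens of groups each, so the enumeration is sizeable and must be organised with care. The correctness of the conclusion consequently rests on a faithful implementation of the centralizer criterion and on the completeness of the SmallGroups database at these orders; granting these, the proposition follows by direct inspection of the resulting computer output.
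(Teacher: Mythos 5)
Your proposal is correct and follows essentially the same route as the paper: reduce to the twelve candidate orders via Proposition \ref{prop:CCT-order-less-127}, then verify the centralizer criterion (abelian $C_G(x)$ for all non-central $x$) by a finite \texttt{GAP4} computation over the SmallGroups library, recording the groups that fail. Your two refinements---pre-filtering by Proposition \ref{prop:small-CCT}$\boldsymbol{(3)}$ and testing only representatives of non-central conjugacy classes (valid since conjugate elements have conjugate, hence simultaneously abelian or non-abelian, centralizers)---are sound efficiency improvements over the paper's script, which loops over all non-central elements, but do not change the substance of the argument.
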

\begin{proof}
We include the \verb|GAP4| code used in the case $|G|=72$. The remaining cases can be treated in exactly the same way by changing the value of the variable \verb|order| in the first line.

\begin{lstlisting}[]
order:=72;;
#creation of list_nonab
list_nonab:=[];;
for t in [1..NumberSmallGroups(order)] do
G:=SmallGroup(order, t);
if IsAbelian(G)=false then
Add(list_nonab, IdSmallGroup(G));
fi; od; 

#creation of list_nonab_non_cct
list_nonab_non_cct:=[];;
for t in list_nonab do
G:=SmallGroup(t);
   #non-central elements in G
    H:=[];;
    for g in G do
    boole:=g in Center(G);
    if boole=false then
    Add(H, g);
    fi; od;
    #checking CCT property via centralizers
    ab:=0;; non_ab:=0;;
    for h in H do
    C:=Centralizer(G, h);
    boole:=IsAbelian(C);
    if boole=true then ab:=ab+1;
    else non_ab:=non_ab+1;
    fi; od;
if ab < Size(H) then
Add(list_nonab_non_cct, t); 
fi; od;
Size(list_nonab); list_nonab_non_cct;
\end{lstlisting}

The output is 
\begin{lstlisting}
44
[ [ 72, 15 ],[ 72, 20 ],[ 72, 21 ],[ 72, 22 ], [ 72, 23 ],[ 72, 24 ], 
  [ 72, 40 ],[ 72, 41 ],[ 72, 42 ],[ 72, 43 ],[ 72, 44 ], [ 72, 46 ] ]
\end{lstlisting}
hence, up to automorphisms, there are $44$ non-abelian groups of order $72$ and, among them,  those  which are not CCT-groups are precisely the ones  in the statement.
\end{proof}

\subsection{Monolithic groups}
Given a finite group $G$, we define  $\operatorname{mon}(G)$ as 
the intersection of all  non-trivial, normal subgroups of $G$.  The group $G$ is called \emph{monolithic} if  $\operatorname{mon}(G) \neq \{1\}$.  Equivalently, $G$ is monolithic if it contains precisely one minimal non-trivial, normal subgroup. 



\begin{example} \label{ex:extra-special-is-monolithic}
A finite $p$-group $G$ is called  \emph{extra-special} if its center $Z(G)$ is cyclic of order $p$  and the quotient $V=G/Z(G)$ is a non-trivial, elementary abelian $p$-group.  Every extra-special group $G$  is monolithic, with $\operatorname{mon}(G)=Z(G)$. Indeed, since $Z(G) \simeq \mathbb{Z}_p$ is normal in $G$, we have $\operatorname{mon}(G) \subseteq Z(G)$. On the other hand, every non-trivial, normal subgroup of an  extra-special group contains the center (see \cite[Exercise 9
  p. 146]{Rob96}), hence $Z(G) \subseteq \operatorname{mon}(G)$.
\end{example}

Actually, a more general result holds true:

\begin{proposition} \label{prop:Mon-if-center-non-trivial}
If $G$ is monolithic and $Z(G) \neq \{1\}$, then $\operatorname{mon}(G)$ is cyclic of prime order.  In particular, if $G$ is a monolithic $p$-group then $\operatorname{mon}(G) \simeq \mathbb{Z}_p$.
\end{proposition}
\begin{proof}
If $Z(G)$ is non-trivial, by definition we have $\operatorname{mon}(G) \subseteq Z(G)$, hence  $\operatorname{mon}(G)$ is abelian and all its subgroups are normal in $G$. Thus the conclusion follows by using the minimality of  $\operatorname{mon}(G)$ and the fact that a finite $p$-group has a non-trivial center.
\end{proof}

In \cite{PolSab24} we obtained a classification of the non-abelian, monolithic groups of order up to $64$ that are not CCT-groups. We now broaden this analysis to include all groups of order at most $127$.

\begin{proposition} \label{prop:monolithic-non-CCT-order-less-127}
Let $G$ be a  non-abelian, monolithic group of order $65 \leq |G| \leq 127$. If $G$ is not a \emph{CCT}-group, then $G$ is isomorphic to one of the following$:$ 
\footnotesize
\begin{equation}
\begin{split}
& G(72, \, 40), \; G(72, \, 41), \\ 
& G(96,\, 64), \; G(96, \, 70), \; G(96,\, 71), \; G(96,\, 72), \; G(96, \, 190), \; G(96,\, 191), \; G(96, \, 193), \\ &  G(96, 201), \; G(96, \, 202), \; G(96, \, 204), \\ & 
G(108, 17), \\ &  G(120, \, 34).
\end{split}
\end{equation}
\normalsize
\end{proposition}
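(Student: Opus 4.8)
The plan is to start from the complete list of non-abelian, non-CCT groups of order $65 \leq |G| \leq 127$ furnished by Proposition \ref{prop:non-CCT}, and then to isolate among them the monolithic ones. Since that earlier proposition already reduces the problem to a finite, explicitly enumerated collection of groups, what remains is a filtering step rather than a fresh structural analysis.

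First I would fix the working criterion for monolithicity. By definition $G$ is monolithic precisely when its monolith $\operatorname{mon}(G)$, namely the intersection of all non-trivial normal subgroups, is itself non-trivial; and this holds if and only if $G$ possesses exactly one minimal non-trivial normal subgroup. Indeed, every non-trivial normal subgroup contains a minimal one, so the existence of two distinct minimal normal subgroups $M_1 \neq M_2$ would force $M_1 \cap M_2 = \{1\}$ by minimality, whence $\operatorname{mon}(G) = \{1\}$; conversely, a unique minimal normal subgroup is itself the monolith.

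Then, for each group $G$ in the list of Proposition \ref{prop:non-CCT}, I would compute its minimal normal subgroups in \verb|GAP4|, for instance via the command \verb|MinimalNormalSubgroups(G)|, and retain exactly those groups for which this list has length one. Applying this test uniformly across all the relevant orders $72, \, 80, \, 84, \, 96, \, 100, \, 108, \, 112, \, 120, \, 126$ yields the groups displayed in the statement and discards all the others.

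The computation is entirely mechanical, so the only genuine work lies in guaranteeing completeness: that the input list from Proposition \ref{prop:non-CCT} is exhaustive and correctly transcribed, and that the monolithicity test is applied to every single entry. No individual case is expected to present a real obstacle; the one delicate point is the bookkeeping over the large number of groups---especially among the family $G(96, \, t)$---which is exactly what the \verb|GAP4| routine is designed to handle reliably.
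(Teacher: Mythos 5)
Your proposal is correct and follows essentially the same route as the paper: both start from the list of non-abelian, non-CCT groups in Proposition \ref{prop:non-CCT} and filter it by a mechanical \texttt{GAP4} monolithicity test, the only (immaterial) difference being that the paper computes $\operatorname{mon}(G)$ as the intersection of all non-trivial normal subgroups while you check that \texttt{MinimalNormalSubgroups} returns a single entry---two criteria whose equivalence you correctly justify.
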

\begin{proof}
We again rely on \verb|GAP4|. To illustrate the procedure, we focus on the case 
$|G| = 72$; the analysis  for the remaining values of $|G|$ can be carried out in exactly the same way. We first run the script used in the proof of 
Proposition \ref{prop:non-CCT}, which produces the list 
\verb|list_nonab_non_cct| of all non-abelian, non-CCT groups of order $72$. We then 
execute the following script, which selects from this list the groups that are  monolithic:

\begin{lstlisting} 
#iteration checking monolithicy property 
list_mon:=[];;
for t in list_nonab_non_cct do
G:=SmallGroup(t);
N:=NormalSubgroups(G);;
Np:=[];;
for n in N do
if Order(n)>1 then
Add(Np, n);
fi; od;
M:=G;
for n in Np do
M:=Intersection(M, n);
od;
if Order(M)>1 then 
Add(list_mon, IdSmallGroup(G));
fi; od;
list_mon;
\end{lstlisting}
The output is
\begin{lstlisting}
[ [ 72, 40 ], [ 72, 41 ] ]
\end{lstlisting}
as this proves our claim.

   \end{proof}

\section{Pure surface braid groups and admissible braid quotients}
\label{sec:pure-braids}

For more details about the content of this section, see \cite{PolSab23}. Let $\Sigma_b$ be a compact Riemann surface of genus $b \geq 2$, and let
$\mathscr{P} = (p_1, \, p_2)$ be an ordered pair of distinct points on
$\Sigma_b$. A \emph{pure braid} (on two strands) on $\Sigma_b$ based at $\mathscr{P}$
is a pair $(\alpha_1, \, \alpha_2)$ of paths $\alpha_i \colon [0, \, 1]
\to \Sigma_b$ such that
\begin{itemize}
	\item $\alpha_i(0) = \alpha_i(1)=p_i \quad \textrm{for all }i \in
		\{1,\, 2\}$
	\item the points $\alpha_1(t), \, \alpha_2(t) \in \Sigma_b$
		are pairwise
		distinct for all $t \in [0, \, 1].$
\end{itemize}

\begin{definition} \label{def:braid}
	The \emph{pure braid group} on two strands on $\Sigma_b$ is the group
	$\mathsf{P}_{2}(\Sigma_b)$ whose elements are the pure braids
	based at
	$\mathscr{P}$ and whose operation is the usual concatenation of paths,
	up to homotopies among braids.
\end{definition}
It can be shown that $\mathsf{P}_{2}(\Sigma_b)$ does not depend on the choice
of the set $\mathscr{P}=(p_1, \, p_2)$, and that there is an isomorphism
\begin{equation} \label{eq:iso-braids}
	\mathsf{P}_{2}(\Sigma_b) \simeq \pi_1(\Sigma_b \times \Sigma_b
		- \Delta,
	\, \mathscr{P})
\end{equation}
where $\Delta \subset \Sigma_b \times \Sigma_b$ is the diagonal.

The group $\mathsf{P}_{2}(\Sigma_b)$  is finitely presented for all $b$,
and explicit presentations can be found in  \cite{Bir69, Scott70, Bel04, GG04}. 
Here we consider a system of  $4b+1$ generators described in \cite[Sections 1-3]{GG04}, 
referring the reader to that paper for further details. For all $j \in \{1, \ldots, b\}$, 
let us consider  the $4b$ elements
\begin{equation} \label{eq:braid-generators}
	\rho_{1j}, \; \tau_{1 j}, \; \rho_{2j}, \; \tau_{2 j}
\end{equation}
of $\mathsf{P}_{2}(\Sigma_b)$ represented by the pure braids shown in
Figure \ref{fig1}.


\begin{figure}[H]
    \centering
    \begin{subfigure}[b]{0.35\textwidth}
        \includegraphics*[totalheight=1.5 cm]{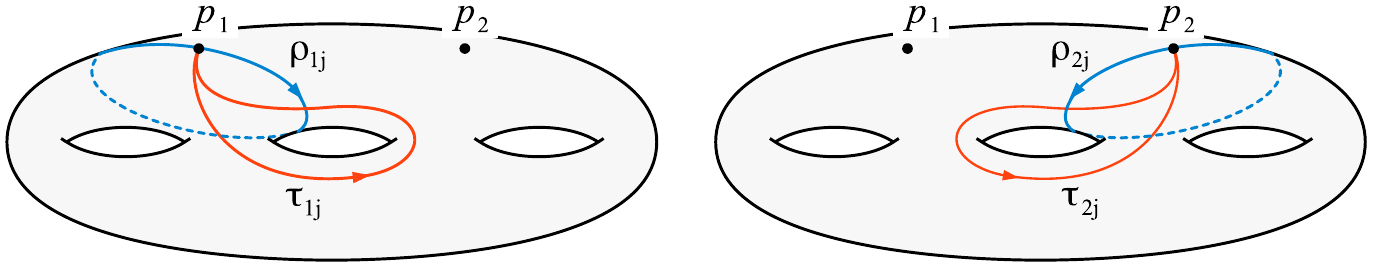}
        \caption{$\rho_{1j}, \, \tau_{1j}$,
			$\rho_{2j}, \, \tau_{2j}$}
        \label{fig1}
    \end{subfigure}
    \hfill
    \begin{subfigure}[b]{0.35\textwidth}
        	\includegraphics*[totalheight=1 cm]{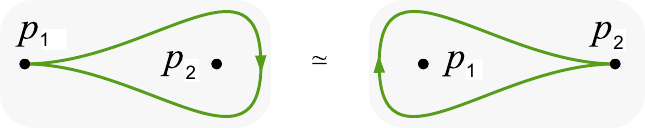}
            \caption{ $A_{12}$}
            \label{fig2}
    \end{subfigure}
    \caption{Generators of $\mathsf{P}_{2}(\Sigma_b)$}
\end{figure}

If $\ell \neq i$, the path corresponding to $\rho_{ij}$ and $\tau_{ij}$
based at $p_{\ell}$ is the constant path. Moreover, let $A_{12}$ be
the pure braid shown in Figure \ref{fig2}. In terms of the isomorphism
\eqref{eq:iso-braids}, the generators $\rho_{ij}$, $\tau_{ij}$ correspond to
the generators of $\pi_1(\Sigma_b \times \Sigma_b - \Delta, \, \mathscr{P})$
coming from the usual description of $\Sigma_b$ as the identification space
of a regular $4b$-gon, whereas $A_{12}$ corresponds to the homotopy class
in $\Sigma_b \times \Sigma_b - \Delta$ of a topological loop in $\Sigma_b
\times \Sigma_b$ that ``winds once'' around $\Delta$. Then the elements
\begin{equation*}
\rho_{11}, \, \tau_{11}, \ldots, \rho_{1b}, \, \tau_{1b}, \,\rho_{21}, \, \tau_{21}, \ldots, \rho_{2b}, \, \tau_{2b}, \, A_{12} 
\end{equation*}
generate $\mathsf{P}_2(\Sigma_b)$. We can now define the objects studied in this paper.


\begin{definition} \label{def:pure-braid-quotient}
	Let $b, \, n \geq 2$ be integers, and let $G$ be a finite group. A group
	epimomorphism
	\begin{equation} \label{eq:group-epimorphism}
		\varphi \colon \mathsf{P}_2(\Sigma_b) \to G
	\end{equation}
	is called an \emph{admissible epimorphism of type} $(b, \, n)$ if $\varphi(A_{12})$ has order $n$.
	In this case, $G$ will be called an \emph{admissible $($pure$)$ braid quotient of type} $(b, \, n)$.
\end{definition}

\begin{remark}\label{rmk:purebraidquotient_nonabelian}
     The inclusion map $\iota \colon \Sigma_b \times \Sigma_b - \Delta
	\to \Sigma_b \times \Sigma_b$ induces a group epimorphism 
\begin{equation}	
	\iota_*
	\colon \mathsf{P}_2(\Sigma_b) \to \pi_1(\Sigma_b \times \Sigma_b,
	\, \mathscr{P}),
\end{equation}	
	 whose kernel is the normal closure of the subgroup
	generated by $A_{12}$. Thus, a group homomorphism $\varphi
	\colon
	\mathsf{P}_2(\Sigma_b) \to G$  is admissible if and only if $\varphi$ does not factor through 
	$\pi_1(\Sigma_b \times \Sigma_b, \, \mathscr{P})$.
	\end{remark}

\section{From Topology to Combinatorial Group Theory: Diagonal double Kodaira Structures}  \label{sec:DDKS}
\subsection{Diagonal double Kodaira structures} \label{subsec:ddks} 
Let us now translate the problem of constructing admissible epimorphisms $\varphi \colon \mathsf{P}_2(\Sigma_b) \to G$ into the problem of detecting some combinatorial structures on the finite group $G$. For more details about the content of this section, we refer the reader to \cite{Pol22, PolSab22, PolSab24}. 

Let $G$ be a finite group and let $b, \, n \geq 2$ be two positive integers.  A \emph{diagonal double Kodaira structure} of type $(b, \, n)$ on $G$
  is an ordered set of	$4b+1$ generators
  \begin{equation} \label{eq:ddks}
    \S = (\r_{11}, \, \t_{11}, \ldots, \r_{1b}, \, \t_{1b}, \;
      \r_{21}, \,
    \t_{21}, \ldots, \r_{2b}, \, \t_{2b}, \; \z ),
  \end{equation}
  with $o(\z)=n$, which are the images of the ordered set of generators
  \begin{equation}
  (\rho_{11}, \, \tau_{11}, \ldots, \rho_{1b}, \, \tau_{1b}, \; \rho_{21}, \, \tau_{21}, \ldots, \rho_{2b}, \, \tau_{2b}, \, A_{12})
  \end{equation}
  via an admissible braid epimorphism $\varphi \colon \mathsf{P}_2(\Sigma_b) \to G$ of type $(b, \, n)$.  
By definition, the classification of admissible braid quotients is equivalent to the classification of finite groups that admit diagonal double Kodaira structures. In the ensuing sections, we complete this classification for groups of order at most $127$.

\begin{remark} \label{rmk:why-ddks}
The term \emph{diagonal double Kodaira structure} originates from algebraic geometry. It was introduced by the first author in \cite{Pol22}, as the existence of such a structure of type $(b,\, n)$ on a finite group $G$ is equivalent to the existence of a $G$-cover
\begin{equation}
\mathbf{f} \colon S \to \Sigma_b \times \Sigma_b,
\end{equation}
branched with order $n$ along the diagonal $\Delta \subset \Sigma_b \times \Sigma_b$. This implies that $S$ is a \emph{double Kodaira surface}, namely, a complex surface of general type endowed with two distinct smooth and non-isotrivial fibrations, see \cite{CatRol09}. 
\end{remark}

The precise relations satisfied by the elements of $\mathfrak{S}$ can be found in \cite[Section 3]{PolSab22}. Here we just write down the special case of a diagonal double Kodaira structure 
of type $(2,\,n)$, that we will use later. It consists of an ordered set of nine generators of $G$,
\begin{equation}
\S=  (\r_{11}, \, \t_{11}, \,  \r_{12}, \, \t_{12}, \, \r_{21}, \,
    \t_{21}, \,
  \r_{22}, \, \t_{22}, \, \z ),
\end{equation}
with $o(\z)=n \geq 2$, subject to the following relations:
\scriptsize
\reqnomode
\begin{equation} \label{eq:ddks-genus-2}
  \begin{aligned}
    \mathbf{(S1)} & \,\, [\r_{12}^{-1}, \, \t_{12}^{-1}] \,
    \t_{12}^{-1} \,
    [\r_{11}^{-1}, \, \t_{11}^{-1}] \, \t_{11}^{-1}\, (\t_{11}
    \, \t_{12}) =
    \z & \\ \mathbf{(S2)} & \, \, [\r_{21}^{-1}, \, \t_{21}]
    \; \t_{21} \;
    [\r_{22}^{-1}, \, \t_{22}] \, \t_{22}\, (\t_{22}^{-1} \,
    \t_{21}^{-1})=
    \z^{-1} \\
    & \\
    \mathbf{(R1)} & \, \, [\r_{11}, \, \r_{22}]=1 & \mathbf{(R6)}
    & \, \,
    [\r_{12}, \, \r_{22}]=1 \\
    \mathbf{(R2)} & \, \, [\r_{11}, \, \r_{21}]=1 &
    \mathbf{(R7)} & \, \, [\r_{12}, \, \r_{21}]=
    \z^{-1}\,\r_{21}\,\r_{22}^{-1}\,\z\,\r_{22}\,\r_{21}^{-1} \\
    \mathbf{(R3)} & \, \, [\r_{11}, \, \t_{22}]=1 & \mathbf{(R8)}
    & \, \,
    [\r_{12}, \, \t_{22}]=\z^{-1} \\
    \mathbf{(R4)} & \, \, [\r_{11}, \, \t_{21}]=\z^{-1} &
    \mathbf{(R9)} & \, \,
    [\r_{12}, \, \t_{21}]=[\z^{-1}, \, \t_{21}] \\
    \mathbf{(R5)} & \, \, [\r_{11}, \, \z]=[\r_{21}^{-1}, \,
    \z] & \mathbf{(R10)}
    & \, \, [\r_{12}, \, \z]=[\r_{22}^{-1}, \, \z] \\
    & \\
    \mathbf{(T1)} & \, \, [\t_{11}, \, \r_{22}]=1 & \mathbf{(T6)}
    & \, \,
    [\t_{12}, \, \r_{22}]= \t_{22}^{-1}\, \z \, \t_{22} \\
    \mathbf{(T2)} & \, \, [\t_{11}, \, \r_{21}]=\t_{21}^{-1}\,
    \z \, \t_{21}
    & \mathbf{(T7)} & \, \, [\t_{12}, \, \r_{21}]= [\t_{22}^{-1},
    \, \z] \\
    \mathbf{(T3)} & \, \, [\t_{11}, \, \t_{22}]=1 & \mathbf{(T8)}
    & \, \,
    [\t_{12}, \, \t_{22}]=[\t_{22}^{-1}, \, \z] \\
    \mathbf{(T4)} & \, \, [\t_{11}, \, \t_{21}]=[\t_{21}^{-1},
    \, \z] &
    \mathbf{(T9)} & \, \, [\t_{12}, \, \t_{21}]=
    \t_{22}^{-1}\, \z \,\t_{22} \,\z^{-1} \,\t_{21} \,\z
    \,\t_{22}^{-1}\,\z^{-1}
    \,\t_{22}\,\t_{21}^{-1} \\
    \mathbf{(T5)} & \, \, [\t_{11}, \, \z]=[\t_{21}^{-1}, \,
    \z] & \mathbf{(T10)}
    & \, \, [\t_{12}, \, \z]=[\t_{22}^{-1}, \, \z] \\
  \end{aligned}
\end{equation}
\leqnomode
\normalsize

\subsection{Prestructures} \label{subsec:prestructures}
To determine whether a finite group $G$ admits diagonal double Kodaira structures, we first search for certain related combinatorial objects, which we call \emph{prestructures}.

\begin{definition} \label{ref:prestructure} Let $n \geq 2$. A \emph{prestructure of type} $n$ 
  on $G$ is an ordered set of nine elements
  \begin{equation} \label{eq:prestructure}
   \P= (\r_{11}, \, \t_{11}, \,  \r_{12}, \, \t_{12}, \, \r_{21},
      \, \t_{21}, \,
    \r_{22}, \, \t_{22}, \, \z ),
  \end{equation}
  with $o(\z)=n$, subject to the relations $(\mathrm{R1}), \ldots,
  (\mathrm{R10})$, $(\mathrm{T1}), \ldots, (\mathrm{T10})$ in
  \eqref{eq:ddks-genus-2}.
\end{definition}

In other words, the nine elements in $\mathfrak{P}$ must satisfy all the relations defining a diagonal double Kodaira structure of type $(2,\, n)$, except for the surface relations. Note that we do \emph{not} require the elements of a prestructure to generate $G$.

\begin{remark} \label{rmk:G-non-commutative}
If $G$ admits a prestructure of type $n$, then  $\mathsf{z}$ is a commutator of order $n \geq 2$ in $G$. Hence $G$ must be \emph{non-abelian} and $n$ divides the order of $[G, \, G]$.
\end{remark}

\begin{remark} \label{remark:prestructure-structure}
If $G$ admits no prestructure of type $n$, then it admits no diagonal double Kodaira structure of type $(b,n)$ for any $b$. In particular, if $G$ admits no prestructure at all, then it admits no diagonal double Kodaira structure of any type and therefore cannot occur as an admissible braid quotient.
\end{remark}

A key point is  now provided by the following

\begin{remark}\label{rmk:cct-no-prestructure}
A CCT-group admits no prestructure, see \cite[Proposition 4.4]{PolSab22}. Consequently, by Remark \ref{remark:prestructure-structure}, the search for diagonal double Kodaira structures can be restricted to non-CCT groups.

\end{remark}

We now explain the role played by monolithic groups in this setting. Let
\begin{equation}
\P= (\r_{11}, \, \t_{11}, \,  \r_{12}, \, \t_{12}, \, \r_{21},
      \, \t_{21}, \,
    \r_{22}, \, \t_{22}, \, \z ),
\end{equation}
be a prestructure of type $n$ on a finite group $G$. If $ \z \notin \operatorname{mon}(G)$, we can find a non-trivial normal subgroup $N$ of $G$ such that $\z \notin N$. Thus, the ordered set
 \begin{equation}
\bar{\P} =(\bar{\r}_{11}, \, \bar{\t}_{11}, \,  \bar{\r}_{12}, \, \bar{\t}_{12}, \, \bar{\r}_{21},
      \, \bar{\t}_{21}, \,
    \bar{\r}_{22}, \, \bar{\t}_{22}, \, \bar{\z} )
\end{equation}
is a prestructure of type $\bar{n}$ on the quotient group $G/N$, with $\bar{n}$ dividing $n$, and $\P$ is obtained as a lifting of  $\bar{\P}$ via the quotient homomorphism $\pi \colon G \to G/N$. Exactly the same argument applies to a diagonal double Kodaira structure $\S$  of type $(b, \,n)$. This yields the result below, cf.~\cite[Proposition 4.7]{PolSab22}.
\begin{proposition} \label{prop:monolithic-argument} If $G$ is a finite group, then the following holds.
\begin{itemize}
\item[$\boldsymbol{(1)}$ ] Assume that  $G$ admits a prestructure  $\mathfrak{P}$ of type $n$.  Then either $G$ is monolithic, and $\z \in \operatorname{mon}(G)$, or  there exist a proper quotient $H$ of $G$ and a prestructure $\bar{\P}$ of type $\bar{n}$ on $H$, with $\bar{n}$ dividing $n$, such that $\P$ is obtained as a lifting of $\bar{\P}$ via the quotient homomorphism $\pi \colon G \to H$.
\item[$\boldsymbol{(2)}$ ] Assume that  $G$ admits a double Kodaira structure  $\mathfrak{S}$ of type $(b, \, n)$.  Then either $G$ is monolithic, and $\z \in \operatorname{mon}(G)$, or  there exist a proper quotient $H$ of $G$ and a prestructure $\bar{\S}$ of type $(b, \, \bar{n})$ on $H$, with $\bar{n}$ dividing $n$, such that $\S$ is obtained as a lifting of $\bar{\S}$ via the quotient homomorphism $\pi \colon G \to H$.
\end{itemize}
\end{proposition}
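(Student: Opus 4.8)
The plan is to prove both parts at once by means of a single dichotomy, governed by whether the distinguished element $\z$ lies in $\operatorname{mon}(G)$. Since $o(\z)=n\geq 2$, the element $\z$ is non-trivial; hence the mere condition $\z\in\operatorname{mon}(G)$ already forces $\operatorname{mon}(G)\neq\{1\}$, so that $G$ is automatically monolithic. This is precisely the first alternative in each of the two statements, so it remains only to treat the complementary case $\z\notin\operatorname{mon}(G)$.

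In that case I would unwind the definition of the monolith as the intersection $\operatorname{mon}(G)=\bigcap_{\{1\}\neq N\trianglelefteq G}N$ of all non-trivial normal subgroups. If $\z$ belonged to every such $N$, it would lie in their intersection, contrary to assumption; therefore there exists at least one non-trivial normal subgroup $N\trianglelefteq G$ with $\z\notin N$. I then set $H:=G/N$ and let $\pi\colon G\to H$ be the quotient homomorphism; since $N\neq\{1\}$, the group $H$ is a proper quotient of $G$.

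The core of the argument is to verify that the componentwise image $\bar{\P}=\pi(\P)$ (respectively $\bar{\S}=\pi(\S)$) satisfies all the conditions required of a prestructure (respectively of a diagonal double Kodaira structure) on $H$. Here the point is structural rather than computational: each of the defining relations $(\mathrm{R}1),\ldots,(\mathrm{R}10)$, $(\mathrm{T}1),\ldots,(\mathrm{T}10)$ of \eqref{eq:ddks-genus-2} --- and, in the second part, also the surface relations $\mathbf{(S1)}$, $\mathbf{(S2)}$ --- is an identity between words in the nine (respectively $4b+1$) generators, and any such identity is automatically carried by the homomorphism $\pi$ to the corresponding identity among the barred elements. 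In the structure case the generating property survives as well, since $\langle\bar{\S}\rangle=\pi(\langle\S\rangle)=\pi(G)=H$; so the image is in fact a genuine structure of type $(b,\bar n)$ on $H$, and in particular a prestructure in the sense required by the statement.

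The only condition that is not preserved for free, and hence the single point that genuinely needs the choice of $N$, is the requirement $o(\bar\z)\geq 2$: a homomorphism could in principle collapse $\z$ to the identity. This is exactly what the monolith-avoiding choice of $N$ rules out. Indeed, because $\z\notin N$ we have $\bar\z=\pi(\z)\neq 1$ in $H$, so $\bar n:=o(\bar\z)\geq 2$; moreover $\bar\z^{\,n}=\pi(\z^{n})=1$ shows $\bar n\mid n$. Consequently $\bar{\P}$ (respectively $\bar{\S}$) is a prestructure of type $\bar n$ (respectively of type $(b,\bar n)$) on the proper quotient $H$, and since $\bar{\P}=\pi(\P)$ (respectively $\bar{\S}=\pi(\S)$) by construction, the tuple $\P$ (respectively $\S$) is a lifting of it along $\pi$. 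I do not anticipate any deeper obstacle: the whole proof amounts to observing that group homomorphisms preserve word-identities, generation, and surface relations, while the monolith hypothesis is used solely to keep $\bar\z$ from becoming trivial.
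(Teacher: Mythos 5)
Your proof is correct and takes essentially the same route as the paper: the dichotomy on whether $\z \in \operatorname{mon}(G)$, the choice of a non-trivial normal subgroup $N$ with $\z \notin N$ in the complementary case, and the observation that the quotient map $\pi \colon G \to G/N$ preserves the defining relations (and generation, in the structure case), while $\z \notin N$ is exactly what guarantees $o(\bar{\z}) = \bar{n} \geq 2$ with $\bar{n}$ dividing $n$. The paper presents this same argument, only more tersely, in the paragraph immediately preceding the proposition.
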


Accordingly, diagonal double Kodaira structures occurring in monolithic groups can be viewed as the ``genuinely new'' ones.

\begin{corollary} \label{cor:prestructure}
Assume that $G$ admits a prestructure $[$resp. a diagonal double Kodaira structure$]$, while none of its proper quotients does. Then $G$ is monolithic and $\z$ belongs to $\operatorname{mon}(G)$. 
\end{corollary}

Even if a prestructure $\P$ of type $n$ in $G$ does not yield a diagonal double 
Kodaira structure of type $(2, \, n)$ on $G$—either because it does not generate $G$ 
or because the surface relations are not satisfied—it could a priori admit an 
extension to a diagonal double Kodaira structure of type $(b, \, n)$ with $b \ge 3$. 
A necessary condition for this to occur is given by the following 

\begin{proposition} \label{prop:prestructure-b-3}
Let 
\begin{equation}
\P= (\r_{11}, \, \t_{11}, \,  \r_{12}, \, \t_{12}, \, \r_{21},
      \, \t_{21}, \,
    \r_{22}, \, \t_{22}, \, \z ),
\end{equation}
be a prestructure of type $n$ on a finite group $G$, and denote by $K$ the subgroup of $G$ generated  
by the elements $\r_{11}, \, \t_{11}, \, \r_{12}, \, \t_{12}$. If $\P$ can be extended to a diagonal double Kodaira structure $\S$ of type $(b, \, n)$, with $b \geq 3$, then the centralizer  $C_G(K)$ strictly contains the center $Z(G)$.
\end{proposition}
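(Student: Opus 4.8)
The plan is to exploit the commutation relations in the presentation of Proposition~\ref{thm:presentation-braid} that hold whenever the second index is strictly larger than the first, combined with the non-centrality statement of Remark~\ref{rmk:non-central}. The key point is that passing from type $(2,\,n)$ to type $(b,\,n)$ with $b \geq 3$ forces the existence of new generators $\r_{23}$ and $\t_{23}$, and these are automatically centralized by the whole of $K$.

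First I would unwind what an extension means. If $\P$ extends to a diagonal double Kodaira structure $\S$ of type $(b,\,n)$ with $b \geq 3$, then $\S$ contains, in addition to the nine elements of $\P$, further generators $\r_{1j}, \, \t_{1j}, \, \r_{2j}, \, \t_{2j}$ for $3 \leq j \leq b$, and the complete ordered tuple satisfies \emph{all} the relations \eqref{eq:presentation-1-bis} and \eqref{eq:presentation-3-bis}. In particular, since $b \geq 3$, the elements $\r_{23}$ and $\t_{23}$ belong to $\S$.

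Next I would read off the relevant relations. For indices $j < k$ the presentation gives $[\r_{1j}, \, \r_{2k}] = [\r_{1j}, \, \t_{2k}] = [\t_{1j}, \, \r_{2k}] = [\t_{1j}, \, \t_{2k}] = 1$. Applying this with $j \in \{1, \, 2\}$ and $k = 3$ (so that $j < k$ holds in each case), I obtain that each of the four generators $\r_{11}, \, \t_{11}, \, \r_{12}, \, \t_{12}$ of $K$ commutes with both $\r_{23}$ and $\t_{23}$. Hence $\r_{23}, \, \t_{23} \in C_G(K)$.

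Finally, since $\r_{23}$ is one of the generators of the structure $\S$ and is distinct from $\z$, Remark~\ref{rmk:non-central} guarantees that $\r_{23}$ is \emph{non-central} in $G$, i.e.\ $\r_{23} \notin Z(G)$. As the inclusion $Z(G) \subseteq C_G(K)$ always holds, the element $\r_{23}$ exhibits a member of $C_G(K)$ lying outside $Z(G)$, which yields the strict containment $C_G(K) \supsetneq Z(G)$. I do not expect a genuine obstacle here: the argument is a direct reading of the ``$j < k$'' commutation relations together with the non-centrality remark. The only points requiring care are the sign/order convention (that it is precisely the smaller-index pair $\r_{1j}, \, \t_{1j}$ which commutes with the larger-index pair $\r_{2k}, \, \t_{2k}$) and the observation that the hypothesis $b \geq 3$ is exactly what makes the index $k = 3$ available.
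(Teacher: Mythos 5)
Your proposal is correct and follows essentially the same route as the paper's proof: reading the $j<k$ commutation relations from \eqref{eq:presentation-1-bis} and \eqref{eq:presentation-3-bis} with $j\in\{1,2\}$, $k=3$ to place $\r_{23}$ in $C_G(K)$, and then invoking Remark \ref{rmk:non-central} to conclude that $\r_{23}\notin Z(G)$. The only cosmetic difference is that you also record $\t_{23}\in C_G(K)$, which the paper does not need.
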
 
\begin{proof}
Looking at the sets of relations for a diagonal double Kodaira structure given in \cite[Section 3]{PolSab22}, we see that $\r_{23} \in \S$ satisfies 
\begin{equation}
[\t_{13}, \, \r_{23}] = \t_{23}^{-1} \,\z \,\t_{23}, \quad [\r_{11}, \, \r_{23}]=[\r_{12}, \r_{23}]=[\t_{11}, \, \r_{23}]=[\t_{12}, \r_{23}]=1, 
\end{equation}
hence it provides a non-central element in $C_G(K)$.
\end{proof}


\begin{corollary} \label{cor:prestructure-b-3}
Let $G$ be a finite group such that, for every prestructure $\P$ in $G$, the subgroup $K=\langle \r_{11}, \, \t_{11}, \, \r_{12}, \, \t_{12} \rangle$ satisfies $C_G(K)=Z(G)$. Then every diagonal double Kodaira structure on $G$ $($if any$)$ is necessarily of type $(2, \, n)$. Equivalently, any admissible epimorphism onto $G$ $($if it exists$)$ must be of the form $\varphi \colon \mathsf{P}_2(\Sigma_2) \to G$.
\end{corollary}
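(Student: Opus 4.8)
The statement to prove, Corollary \ref{cor:prestructure-b-3}, is a near-immediate consequence of Proposition \ref{prop:prestructure-b-3}, so my approach will be a short contrapositive argument. The plan is to assume, for contradiction, that $G$ satisfies the stated centralizer hypothesis yet admits a diagonal double Kodaira structure $\S$ of some type $(b, \, n)$ with $b \geq 3$. From such an $\S$ I will extract the associated prestructure: the first nine components
\begin{equation}
\P = (\r_{11}, \, \t_{11}, \, \r_{12}, \, \t_{12}, \, \r_{21}, \, \t_{21}, \, \r_{22}, \, \t_{22}, \, \z)
\end{equation}
automatically satisfy relations $(\mathrm{R1}),\ldots,(\mathrm{R10})$ and $(\mathrm{T1}),\ldots,(\mathrm{T10})$, since these are exactly the $b=2$ instances of the relations in \eqref{eq:presentation-1-bis} and \eqref{eq:presentation-3-bis} that any structure of type $(b,\,n)$ must obey for $b \geq 2$. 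Hence $\P$ is a genuine prestructure of type $n$ on $G$ in the sense of Definition \ref{ref:prestructure}.

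\textbf{The key step.} With this prestructure in hand, I set $K = \langle \r_{11}, \, \t_{11}, \, \r_{12}, \, \t_{12} \rangle$ and invoke Proposition \ref{prop:prestructure-b-3}. Since $\P$ extends to the structure $\S$ of type $(b, \, n)$ with $b \geq 3$, that proposition guarantees $C_G(K) \supsetneq Z(G)$. But this directly contradicts the hypothesis of the corollary, which asserts $C_G(K) = Z(G)$ for \emph{every} prestructure $\P$ in $G$. This contradiction forces $b = 2$, so every diagonal double Kodaira structure on $G$ must be of type $(2, \, n)$.

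\textbf{The translation to epimorphisms.} Finally, I will close the argument by transferring this conclusion from the combinatorial side to the geometric side via Proposition \ref{prop:ddks-equivalent}: admissible epimorphisms $\varphi \colon \mathsf{P}_2(\Sigma_b) \to G$ correspond bijectively to diagonal double Kodaira structures of type $(b, \, n)$ on $G$. Since we have shown that the only admissible type is $b = 2$, any admissible epimorphism onto $G$ must have source $\mathsf{P}_2(\Sigma_2)$, giving the ``Equivalently'' clause in the statement.

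\textbf{The main obstacle.} Logically, there is essentially no obstacle: the corollary is a formal restatement of Proposition \ref{prop:prestructure-b-3} combined with Proposition \ref{prop:ddks-equivalent}, and all the real content lives in Proposition \ref{prop:prestructure-b-3}. The only point demanding a modicum of care is the observation that restricting a type-$(b,\,n)$ structure with $b \geq 3$ to its first nine entries really does produce a valid prestructure of type $n$—that is, that the relations indexed by $j, k \in \{1, 2\}$ are a faithful sub-collection of the full relation set and do not involve any of the higher generators $\r_{1j}, \t_{1j}, \r_{2j}, \t_{2j}$ for $j \geq 3$. This is clear by inspection of \eqref{eq:presentation-1-bis} and \eqref{eq:presentation-3-bis}, since those relations couple only indices $j$ and $k$ with $j, k \leq 2$ to the fixed element $\z$, so no unforeseen constraint is introduced.
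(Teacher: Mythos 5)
Your proof is correct and is essentially the argument the paper intends: the corollary is stated there as an immediate consequence of Proposition \ref{prop:prestructure-b-3}, and your contrapositive argument—restricting a hypothetical structure of type $(b,\,n)$ with $b \geq 3$ to its first nine entries to obtain a prestructure, invoking the proposition to get $C_G(K) \supsetneq Z(G)$, and then translating back to admissible epimorphisms via Proposition \ref{prop:ddks-equivalent}—is exactly the implicit reasoning, with the helpful extra care of checking that the restricted tuple really satisfies the prestructure relations.
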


\section{Diagonal double Kodaira structures in groups of order at most $127$} \label{sec:most-127}

\subsection{Diagonal double Kodaira structures in groups of order at most $64$}
\label{subsec:order-up-to-64}  

Using the ideas outlined above, in \cite{PolSab22, PolSab24} we established the following result:

\begin{proposition} \label{prop:32-64}
If $G$ admits prestructures and $|G| \le 64$, then either $|G| = 32$ or $|G| = 64$. Consequently, if $|G| \leq 64$ and $|G| \neq 32,64$, then $G$ admits no diagonal double Kodaira structure of any type, and therefore cannot occur as an admissible braid quotient.
\end{proposition}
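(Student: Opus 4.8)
The plan is to reduce the claim to a finite, mechanically checkable enumeration by combining the structural results of Section \ref{sec:DDKS} with the CCT and monolithicity classifications of Section \ref{sec:group-prel}. The key observation is Remark \ref{rmk:cct-no-prestructure}: a CCT-group admits no prestructure, hence no diagonal double Kodaira structure of any type. By Proposition \ref{prop:small-CCT}, every non-abelian group whose order is a product of at most three primes, or is $p^4$, or has an abelian normal subgroup of prime index, is automatically CCT. This immediately eliminates the overwhelming majority of orders $|G| \le 64$, leaving only a short list of orders for which non-CCT groups exist. Combined with the classification of non-CCT groups of order at most $64$ obtained in \cite{PolSab22, PolSab24}, we are left with an explicit finite collection of candidate groups.

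First I would invoke the non-CCT classification to reduce to this finite candidate list, and then apply the monolithicity reduction of Proposition \ref{prop:monolithic-argument}. By part $\boldsymbol{(1)}$ of that proposition, any group admitting a prestructure is either monolithic with $\z \in \operatorname{mon}(G)$, or it lifts a prestructure from a proper quotient of strictly smaller order. This lets us organize the search inductively: it suffices to locate the \emph{minimal} orders at which a prestructure first appears, since all larger examples either are monolithic or descend to one of these minimal cases. The effect is to concentrate the analysis on the non-CCT, \emph{monolithic} groups, which by the classification of \cite{PolSab22, PolSab24} is an even shorter list (for order $32$, the two extra-special groups $\mathsf{H}_5(\mathbb{Z}_2)=G(32,49)$ and $\mathsf{G}_5(\mathbb{Z}_2)=G(32,50)$).

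The remaining step is to verify, for each surviving candidate of order strictly less than $32$ and for each order in $\{33, \ldots, 63\} - \{ \text{already excluded} \}$, that no prestructure exists, while confirming that prestructures do appear at orders $32$ and $64$. Concretely, one runs through the non-CCT groups below $64$ and, using the relations $(\mathrm{R1}),\ldots,(\mathrm{R10})$, $(\mathrm{T1}),\ldots,(\mathrm{T10})$ of \eqref{eq:ddks-genus-2}, checks for the existence of nine elements satisfying Definition \ref{ref:prestructure}; this is exactly what the \verb|GAP4| routine \verb|CheckStructures| performs. I expect the main obstacle to be organizational rather than conceptual: one must be careful that the non-CCT classification below $64$ is complete (so that no candidate order is silently omitted) and that the monolithic reduction is applied consistently, so that the absence of a prestructure on every proper quotient genuinely forces its absence on $G$. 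Since this proposition merely records the outcome of \cite{PolSab22, PolSab24}, the calculations themselves need not be reproduced; the proof reduces to citing those references together with the CCT and monolithicity filters established above.
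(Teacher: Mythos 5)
Your proposal is correct and matches the paper's treatment: the paper gives no independent proof of Proposition \ref{prop:32-64}, presenting it as a result established in \cite{PolSab22, PolSab24} ``using the ideas outlined above,'' which are exactly the two filters you invoke---the CCT exclusion of Remark \ref{rmk:cct-no-prestructure} (via Proposition \ref{prop:small-CCT}) and the monolithic/lifting reduction of Proposition \ref{prop:monolithic-argument}---followed by a finite case check of the surviving non-CCT candidates. Your additional observation that non-monolithic groups of order at most $63$ and different from $32$ cannot inherit prestructures from proper quotients (since such quotients are too small to be among the order-$32$ extra-special groups) is precisely the inductive organization underlying the cited classification.
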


In those papers, we also provided a complete classification of the groups of orders $32$ and $64$ that admit diagonal double Kodaira structures of type $(b, \, n)=(2, \, 2)$, together with the number of such structures in each case. In the present work, we extend this analysis by describing all groups $G$ with $|G| \le 64$ that admit prestructures, and by proving that none of them admits diagonal double Kodaira structures of type $(b, \, n)$ with $b \geq 3$.

\begin{proposition} \label{prop:prestructures up to order 64}
Let $G$ be a finite group with $|G| \leq 64$ that admits a prestructure of type $n$. 
Then necessarily $n = 2$, and $G$ is one of the groups listed in Table $\ref{tab:32_64}$ below. Moreover, none 
of these groups admits diagonal double Kodaira structures of type $(b, \, n)$ with 
$b \ge 3$.

\begin{table}[H]  
    \scriptsize
    \centering
    \begin{tabularx}{0.75\linewidth}{@{}cccc@{}} 
\toprule
& & & $\mathrm{Number} \, \, \mathrm{of} \,\, \mathrm{diagonal} $\\
& & & $ \mathrm{double} \, \, \mathrm{Kodaira}\, \, \mathrm{structures}$     \\
 & & $\mathrm{Number} \, \, \mathrm{of} \,\,  \mathrm{prestructures}  $  &  $\mathrm{of} \,\, \mathrm{type} \,\, (b, \, n)= (2, \, 2)  $  \\
      $\mathrm{IdSmallGroup}(G)$ & $\mathrm{Is} \, \, G\, \,  \, \mathrm{monolithic}?$ & $ \mathrm{in}\, \, G\,\,\mathrm{up} \, \, \mathrm{ to} \, \, \operatorname{Aut}(G)$ & $ \mathrm{in}\, \, G\,\,\mathrm{up} \, \, \mathrm{ to} \, \, \operatorname{Aut}(G)$ \\
      \toprule
$G(32, \, 49)$ & $\mathrm{Yes}$ & $4480$ & $1920$ \\    
$G(32, \, 50)$ & $\mathrm{Yes}$ & $2688$ & $1152$ \\       
$G(64, \, 134)$ & $\mathrm{Yes}$ & $40320$ & $0$ \\        
$G(64, \, 135)$ & $\mathrm{Yes}$ & $40320$ & $0$ \\     
$G(64, \, 136)$ & $\mathrm{Yes}$ & $40320$ & $0$ \\  
$G(64, \, 137)$ & $\mathrm{Yes}$ & $40320$ & $0$ \\  
$G(64, \, 138)$ & $\mathrm{Yes}$ & $26880$ & $0$ \\  
$G(64, \, 139)$ & $\mathrm{Yes}$ & $26880$ & $0$ \\  
$G(64, \, 199)$ & $\mathrm{No}$ & $322560$ & $138240$ \\  
$G(64, \, 200)$ & $\mathrm{No}$ & $107520$ & $46080$ \\ 
$G(64, \, 201)$ & $\mathrm{No}$ & $430080$ & $184320$ \\ 
$G(64, \, 249)$ & $\mathrm{Yes}$ & $860160$ & $368640$ \\ 
$G(64, \, 257)$ & $\mathrm{Yes}$ & $26880$ & $0$ \\ 
$G(64, \, 258)$ & $\mathrm{Yes}$ & $53760$ & $0$ \\ 
$G(64, \, 259)$ & $\mathrm{Yes}$ & $26880$ & $0$ \\ 
$G(64, \, 264)$ & $\mathrm{No}$ & $38080$ & $14400$ \\ 
$G(64, \, 265)$ & $\mathrm{No}$ & $22848$ & $8640$ \\ 
$G(64, \, 266)$ & $\mathrm{Yes}$ & $60928$ & $23040$ \\ 
\bottomrule
\end{tabularx} 
\caption{Groups of order at most $64$ with prestructures}
\label{tab:32_64} 
 \end{table} 

\end{proposition}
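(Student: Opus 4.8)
The plan is to combine the structural reductions developed in Section \ref{sec:DDKS} with the explicit enumeration performed by the routine \verb|CheckStructures|. I would start by invoking Proposition \ref{prop:32-64}: a group $G$ with $|G| \leq 64$ that admits any prestructure must have order $32$ or $64$. By Remark \ref{rmk:cct-no-prestructure} such a $G$ is necessarily non-CCT, so the relevant candidates are precisely the non-CCT groups of orders $32$ and $64$, as classified in \cite{PolSab22, PolSab24}. For each of these finitely many groups I would run \verb|CheckStructures| to list all prestructures, recording in each case the order $n = o(\z)$; this simultaneously establishes that $n = 2$ throughout and produces the precise list of groups appearing in Table \ref{tab:32_64}, in agreement with \cite[Theorem 4.2]{PolSab24}. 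That $n=2$ is moreover conceptually transparent for the monolithic groups in the list: since these are $2$-groups, Proposition \ref{prop:Mon-if-center-non-trivial} gives $\operatorname{mon}(G) \simeq \mathbb{Z}_2$, so that in the cases where $\z \in \operatorname{mon}(G)$ one has $o(\z)=2$ immediately.

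From the same enumeration I would extract the two data columns of Table \ref{tab:32_64}: the total number of prestructures, and the number of those that additionally generate $G$ and satisfy the surface relations $\mathbf{(S1)}$, $\mathbf{(S2)}$ — that is, the genuine diagonal double Kodaira structures of type $(2,2)$ — each counted up to the action of $\operatorname{Aut}(G)$. This portion is a routine, if sizeable, bookkeeping computation, made feasible by the modest order of the groups involved.

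The genuinely new assertion is the last one, that none of the tabulated groups admits a structure of type $(b,n)$ with $b \geq 3$, and for this I would appeal to Corollary \ref{cor:prestructure-b-3}. The key observation is that restricting any type $(b,n)$ structure to the indices $j,k \in \{1,2\}$ yields, by the very form of the relations \eqref{eq:presentation-1-bis} and \eqref{eq:presentation-3-bis}, a prestructure of type $n$ that extends to the full structure; by Proposition \ref{prop:prestructure-b-3}, any such prestructure would have to satisfy $C_G(K) \supsetneq Z(G)$, where $K = \langle \r_{11}, \t_{11}, \r_{12}, \t_{12} \rangle$. It therefore suffices to verify, for every group $G$ in the table and every prestructure $\P$ found above, the equality $C_G(K) = Z(G)$; this centralizer test is again delegated to \verb|CheckStructures|. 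I expect the main obstacle to be not conceptual but one of completeness: one must be certain that the enumeration of prestructures — and hence the orbit counts and the centralizer checks — is genuinely exhaustive up to $\operatorname{Aut}(G)$. Once the computations are accepted, all three assertions of the Proposition follow at once.
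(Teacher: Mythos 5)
Your proposal is correct and follows essentially the same route as the paper's proof: reduction to orders $32$ and $64$ via Proposition \ref{prop:32-64}, exhaustive enumeration of prestructures and type-$(2,\,2)$ structures with \texttt{CheckStructures} (which also yields $n=2$ in every case), and the centralizer test $C_G(K)=Z(G)$ combined with Corollary \ref{cor:prestructure-b-3} to exclude structures with $b \geq 3$. The only difference is one of efficiency, not logic: the paper first narrows the order-$64$ candidates using Proposition \ref{prop:monolithic-argument} (either $G$ is monolithic with $\z \in \operatorname{mon}(G)$, or $G$ has an extra-special quotient of order $32$, the occurrences being listed in \cite{PolSab24}), whereas you would run the computation on all non-CCT groups of order $64$ directly.
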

\begin{proof}
By Proposition \ref{prop:32-64}, it suffices to consider the cases $|G| = 32$ and $|G| = 64$.

The groups of order $32$ that admit prestructures were identified in \cite[Section 4]{PolSab22}: they are precisely the two extra-special groups, namely $\mathsf{H}_5(\mathbb{Z}_2)=G(32, \, 49)$ and $\mathsf{G}_5(\mathbb{Z}_2)=G(32,\, 50)$. Moreover, the number of prestructures and diagonal double Kodaira structures in each case can be computed either by using \verb|GAP4|, or by  the same computer-free techniques employed in the proof of \cite[Theorem 4.15]{PolSab22}.

Next, assume  that $G$ is a non-CCT group of order $64$. By Proposition \ref{prop:monolithic-argument}, if $G$ admits a prestructure either $G$ is monolithic (and $\z \in \operatorname{mon}(G) \simeq \mathbb{Z}_2$, see Proposition \ref{prop:Mon-if-center-non-trivial}), or $G$ admits a quotient $H$ which is extra-special of order $32$. The occurrences in both cases are listed in \cite{PolSab24}:
\begin{itemize}
\item If $G$ is monolithic, then $G$ is of the form $G(64, \, t),$ with $t$ in the set
\begin{equation} \label{eq:case_64_monolithic}
\begin{split}
\{ & 18, \, 19,\, 25,\, 28,\, 30,\, 32, \,33, \,34,\, 35, \,36,\, 37,\, 41,\, 42,\, 43,\, 46,
\, 91,\, 94, \,102,\, 111, \,125,\, 134,\, \\
   & 135,\, 136,\, 137,\, 138,\, 139, \,152, \,153,\, 154,\, 190,\, 191, \, 249, \,256, \, 257, \, 258,\, 259,\, 266\}.
\end{split}
\end{equation}
\item If  $G$ is non-monolithic and admits an extra-special quotient of order $32$, then $G$ is of the form $G(64, \, t),$ with $t$ in the set
\begin{equation} \label{eq:case_64_non_monolithic}
\begin{split}
\{ & 199, \,200,\, 201,\, 215, \, 216,\, 217,\, 218,\, 219, \,220,\, 221,\, 222,\, 223,\, 224,\, 225,\, 226,\, 227,\, 228, \,229, \\
& 230,\, 231, \, 232, \, 233, \, 234, \, 235, \, 236, \, 237, \, 238,\, 239, \, 240, \, 264, \, 265\}.
\end{split}
\end{equation}
\end{itemize}

In order to analyze all these cases, we implemented a \verb|GAP4| function called \verb|CheckStructures|, see Part 1 of our script in the GitHub repository \cite{GAP4DDKS}. Such a function provides a complete list of prestructures and diagonal double Kodaira structures of type $(2, \, n)$ for a given finite group
$G$ in the \verb|GAP4| database, modulo the action of $\operatorname{Aut}(G)$. 

 
For the reader’s convenience, we illustrate how \verb|CheckStructures| operates in the case $G=G(32,49)$; the other cases can be treated analogously.
\begin{lstlisting}
Read("Function_CheckStructures.g");;
G := SmallGroup(32,49);;
L := CheckStructures(G);;
\end{lstlisting} 
The output is
\begin{lstlisting}
Group ID: [ 32, 49 ] - Structure: (C2 x C2 x C2) : (C2 x C2)
Number of non-central elements of G = 30
Generating the list of five candidates ...
List of five candidates ready, size = 322560
...computing up to automorphism:
Elements modulo auto: 352
------------
Checking for prestructures now, 352 elements to go
Prestructures found 5632
...computing modulo automorphism
Elements modulo auto: 4480
Checking for structures now
Structures found: 1920
...computing up to automorphism:
Elements modulo auto: 1920
\end{lstlisting} 
and this shows that $G$ contains $1920$ diagonal double Kodaira structure of type $(b, \, n)$, up to automorphisms. Verifying that $n=2$ in every case is straightforward, either using \verb|GAP4| or noticing that $[G, \, G] \simeq \mathbb{Z}_2$. Finally, these are the only diagonal double Kodaira structures in $G$. To see this, it is sufficient to check that, for every prestructure in $G$, the subgroup $K=\langle \r_{11},\,\t_{11}, \, \r_{12}, \, \t_{12} \rangle$ satisfies $C_G(K) = Z(G)$, see Corollary \ref{cor:prestructure-b-3}:
\begin{lstlisting}
#Define the group G(32, 49)
H:=SmallGroup(32, 49);;
StructureDescription(H);
G:=Image(IsomorphismFpGroup(H));;
AssignGeneratorVariables(G);; 
RelatorsOfFpGroup(G);
f1:=F1;; f2:=F2;; f3:=F3;; f4:=F4;; f5:=F5;;
#Compute the order of the subgroup K for every list
#and check whether is centralizer equals the center of G
Read("List_prestructures_mod_32_49.g");
list:=List_prestructures_mod_32_49;;
list_K:=[];;
for l in list do
K:=Subgroup(G, [l[1], l[2], l[3], l[4]]);
C:=Centralizer(G, K);
AddSet(list_K, [Order(K), C=Center(G)]);
od;
list_K;
\end{lstlisting}   
The output is
\begin{lstlisting}
[ [32, true] ] 
\end{lstlisting}   
This means that for every prestructure in $G$ we have $K=G$, hence $C_G(K) = Z(G)$, as claimed. 
 
 \end{proof}

\subsection{Diagonal double Kodaira structures in groups of order between $65$ and $127$}
\label{subsec:order-up-to-127}  

We want now to extend Proposition \ref{prop:prestructures up to order 64} to groups $G$ of order $|G| \leq 127$. Let us start by ruling out the case where $|G| \neq 96$.

\begin{proposition} \label{prop:different-96}
If $65 \leq |G| \leq 127$ and $|G|\neq 96$, then $G$ admits no prestructure and, consequently, no diagonal double Kodaira structure  of any type.
\end{proposition}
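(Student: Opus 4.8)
The plan is to combine the reduction to non-CCT groups with the monolithic descent of Proposition \ref{prop:monolithic-argument}, and to close the argument by a divisibility count on orders. First I would reduce to non-abelian, non-CCT groups: a CCT-group admits no prestructure by Remark \ref{rmk:cct-no-prestructure}, and an abelian group admits none either, since $\z$ must be a non-trivial commutator (Remark \ref{rmk:G-non-commutative}). By Proposition \ref{prop:non-CCT}, the only orders in the range $65 \le |G| \le 127$ carrying a non-abelian non-CCT group are $72, 80, 84, 96, 100, 108, 112, 120, 126$; in particular the orders $88, 90, 104$ support no non-CCT group and are disposed of immediately. Excluding $96$, this leaves $|G| \in \{72, 80, 84, 100, 108, 112, 120, 126\}$.

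Next I would run the monolithic descent. Assume, for contradiction, that $G$ admits a prestructure. Applying part $\boldsymbol{(1)}$ of Proposition \ref{prop:monolithic-argument} repeatedly produces a strictly descending chain of quotients of $G$, each admitting a prestructure, which terminates---by finiteness of the orders---in a quotient $M$ of $G$ that is monolithic, admits a prestructure, and satisfies $\bar{\z} \in \operatorname{mon}(M)$. Since CCT-groups admit no prestructure, $M$ is a monolithic \emph{non-CCT} group carrying a prestructure, and $|M|$ divides $|G|$, so in particular $|M| \le 127$.

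The decisive step is to pin down the admissible orders of $M$. For $|M| \le 64$, Proposition \ref{prop:prestructures up to order 64} (Table \ref{tab:32_64}) shows that a monolithic group admitting a prestructure has order exactly $32$ or $64$. For $65 \le |M| \le 127$, Proposition \ref{prop:monolithic-non-CCT-order-less-127} lists the monolithic non-CCT groups, whose orders lie in $\{72, 96, 108, 120\}$; among these I would use the routine \verb|CheckStructures| to verify that the four groups $G(72, 40)$, $G(72, 41)$, $G(108, 17)$, $G(120, 34)$ admit no prestructure whatsoever. This leaves $96$ as the only order in the range $65$--$127$ that can support a monolithic prestructure, whence $|M| \in \{32, 64, 96\}$ in all cases. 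Since $|M|$ divides $|G|$, I would conclude by the elementary observation that none of $32, 64, 96$ divides any of $72, 80, 84, 100, 108, 112, 120, 126$; this contradicts the existence of $M$, so $G$ admits no prestructure, and hence no diagonal double Kodaira structure of any type.

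I expect the main obstacle to be precisely the explicit verification, via \verb|CheckStructures|, that the four monolithic non-CCT groups $G(72,40)$, $G(72,41)$, $G(108,17)$, $G(120,34)$ carry no prestructure: everything else is bookkeeping on orders and divisibility. One could in principle replace this computation by a hand analysis of their monoliths, which by Proposition \ref{prop:Mon-if-center-non-trivial} are of the form $\operatorname{mon}(M) \simeq \ZZ_p$ in the centered cases, forcing $\z$ to be a central commutator of prime order subject to the full prestructure relations; however, carrying this out case by case is more delicate than simply invoking the \verb|GAP4| routine, and the computation is the cleaner route.
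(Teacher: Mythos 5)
Your proposal is correct and rests on the same pillars as the paper's own proof: the CCT reduction, the monolithic machinery of Proposition \ref{prop:monolithic-argument}/Corollary \ref{cor:prestructure}, the order classification of Propositions \ref{prop:32-64} and \ref{prop:monolithic-non-CCT-order-less-127}, and the identical \verb|CheckStructures| verification on $G(72,40)$, $G(72,41)$, $G(108,17)$, $G(120,34)$. The only difference is organizational---the paper uses the divisibility observation up front (no quotients of order $32$ or $64$, hence $G$ itself is monolithic and lies in the list), whereas you iterate the descent to a monolithic quotient $M$ and place the divisibility contradiction at the end---but this is the same argument rearranged, not a different route.
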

\begin{proof}
If If $65 \leq |G| \leq 127$ and $|G|\neq 96$, then $G$ cannot have quotients of order $32$ or $64$ and so, by Corollary \ref{cor:prestructure} and Proposition \ref{prop:32-64}, if $G$ admits a prestructure then $G$ is monolithic and $\z \in \operatorname{mon}(G)$. Monolithic, non-CCT groups whose order is as in the statement are listed in Proposition \ref{prop:monolithic-non-CCT-order-less-127}. We can now perform a case-by-case analysis of these groups by using again our  \verb|GAP4| function \verb|CheckStructures|, and verify that the set of prestructures is empty for all of them.
\end{proof}

If $|G| = 96$, the conclusion of Proposition \ref{prop:different-96} no longer applies; however, the following alternatives hold.

\begin{proposition} \label{prop:equal-96}
Let $G$ be a finite group with $|G|=96$ and admitting a prestructure $\P$. 
\begin{itemize}
\item[$\boldsymbol{(1)}$] If $G$ is non-monolithic, then there exist an extra-special quotient $H$ of order $32$ and a prestructure $\bar{\S}$ on $H$ such that $\P$ is obtained as a lifting of $\bar{\P}$ via the quotient homomorphism $\pi \colon G \to H$.
\item[$\boldsymbol{(2)}$] If $G$ does not have extra-special quotients of order $32$, then $G$ is monolithic and $\z \in \operatorname{mon}(G)$.
\end{itemize}
Furthermore, analogous statements hold when $G$ admits a double Kodaira structure $\S$.
\end{proposition}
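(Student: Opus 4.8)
The plan is to combine the monolithicity dichotomy of Proposition~\ref{prop:monolithic-argument} with two elementary numerical constraints. First, any proper quotient $H = G/N$ satisfies $|H| \mid 96$ and $|H| < 96$, so $|H| \in \{1,2,3,4,6,8,12,16,24,32,48\}$. Second, by Propositions~\ref{prop:32-64} and~\ref{prop:prestructures up to order 64}, a group of order at most $64$ admits a prestructure only if its order is $32$ or $64$, and in the order-$32$ case it is one of the two extra-special groups. Since $64 \nmid 96$, the only value admissible for the order of a proper quotient of $G$ that carries a prestructure is $32$, and such a quotient must be extra-special. This single observation drives all three parts.

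First I would prove $\boldsymbol{(3)}$. If $G$ has no extra-special quotient of order $32$, then by the remark above no proper quotient of $G$ can carry a prestructure at all, since any such quotient would have order $32$ and be extra-special. In particular no proper quotient admits a prestructure of type $\bar n$ with $\bar n \mid n$, so Corollary~\ref{cor:prestructure} applies directly and yields that $G$ is monolithic with $\z \in \operatorname{mon}(G)$. For $\boldsymbol{(1)}$, if $G$ is non-monolithic then the first alternative of Proposition~\ref{prop:monolithic-argument} is excluded, so there exist a proper quotient $H$ and a prestructure $\bar{\P}$ on $H$ of which $\P$ is a lifting. Because $H$ carries a prestructure, the numerical discussion forces $|H| = 32$ and $H$ extra-special, which is exactly the assertion; the double Kodaira version is identical, using part $\boldsymbol{(2)}$ of Proposition~\ref{prop:monolithic-argument} in place of part $\boldsymbol{(1)}$.

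The substantive point is $\boldsymbol{(2)}$, where one must produce a \emph{single} quotient serving all prestructures at once. Here I would exploit $[G,G] \simeq \mathbb{Z}_2$ structurally: since $\operatorname{Aut}(\mathbb{Z}_2)$ is trivial, the conjugation action of $G$ on the normal subgroup $[G,G]$ is trivial, hence $[G,G] \subseteq Z(G)$ and $G$ is nilpotent of class at most $2$. A finite nilpotent group is the direct product of its Sylow subgroups, so $G \simeq P \times Q$ with $|P| = 32$ and $Q \simeq \mathbb{Z}_3$ (which incidentally recovers non-monolithicity via Example~\ref{ex:product_non_monolithic}), and $[G,G] = [P,P] \simeq \mathbb{Z}_2 \subseteq P$. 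I would then take $N = Q$, the unique and hence canonical Sylow $3$-subgroup, and set $H = G/N \simeq P$. The key uniformity observation is that $\z$ is the \emph{same} element for every prestructure: by Remark~\ref{rmk:G-non-commutative} it is a non-trivial commutator lying in $[G,G]\simeq\mathbb{Z}_2$, hence forced to be the unique involution generating $[G,G]$, so in particular $\z \notin N$. Projecting any prestructure $\P$ along $\pi \colon G \to H$ therefore preserves all the commutator relations $(\mathrm{R1})$–$(\mathrm{R10})$, $(\mathrm{T1})$–$(\mathrm{T10})$ and keeps $\bar\z$ non-trivial of order $2$, so $\pi(\P)$ is a prestructure on the extra-special group $H$ of which $\P$ is a lifting. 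As $N$ is independent of $\P$, this one $H$ works for every prestructure, which proves $\boldsymbol{(2)}$; the statement for double Kodaira structures is the same.

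The main obstacle I anticipate is $\boldsymbol{(2)}$, and specifically the structural reduction $[G,G]\simeq\mathbb{Z}_2 \Rightarrow G \simeq P \times \mathbb{Z}_3$ together with the observation that all prestructures share the same central $\z$; these are what let one fix a canonical $N = Q$ rather than a prestructure-dependent one. Once the direct-product decomposition and this choice are in place, the remaining verification is a routine transport of the defining relations through the quotient homomorphism, with no further case analysis required.
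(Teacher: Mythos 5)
Your proof is correct, and its skeleton is the paper's: part $\boldsymbol{(1)}$ combines Proposition \ref{prop:monolithic-argument} with the numerical constraint from Proposition \ref{prop:32-64} (a proper quotient of a group of order $96$ carrying a prestructure must have order $32$, hence be extra-special by the classification in Proposition \ref{prop:prestructures up to order 64}), and part $\boldsymbol{(3)}$ is exactly Corollary \ref{cor:prestructure}. In part $\boldsymbol{(2)}$ both arguments rest on the same decisive observation, namely that $\z$ is a non-trivial commutator in $[G,\,G]\simeq \mathbb{Z}_2$, hence the \emph{same} involution for every prestructure, so that a single quotient avoiding $\z$ serves all structures at once. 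The difference is how that quotient is produced: the paper simply takes $H=G/N$ for a non-trivial normal subgroup $N$ with $\z \notin N$ (in practice the kernel furnished by situation $\boldsymbol{(1)}$), whereas you first show $[G,\,G]\subseteq Z(G)$, deduce that $G$ is nilpotent of class at most $2$, decompose $G\simeq P\times \mathbb{Z}_3$ into its Sylow subgroups, and take the canonical kernel $N=Q$. Your version makes it self-evident that $N$ is independent of the prestructure, at the cost of an extra (easy) structural step. The one thing you leave unsaid is why $H\simeq P$ is extra-special: this follows in a single line either from situation $\boldsymbol{(1)}$ together with your decomposition (the extra-special quotient there has kernel of order $3$, which must equal $Q$ by uniqueness of the Sylow $3$-subgroup, so that quotient is isomorphic to $P$), or from the classification you quote in your opening paragraph (since $\pi(\P)$ is a prestructure on the order-$32$ group $P$, the group $P$ must be $G(32,\,49)$ or $G(32,\,50)$); the gap is cosmetic, but the sentence should be added.
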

\begin{proof}
Part $\boldsymbol{(1)}$ follows directly from Propositions \ref{prop:monolithic-argument} and \ref{prop:32-64}, whereas part $\boldsymbol{(2)}$ is a consequence of Corollary \ref{cor:prestructure}. The proofs of the analogous statements when $G$ admits a diagonal double Kodaira structure $\S$ are exactly the same.
\end{proof}

In the rest of this section, we classify all groups of order $96$ that admit prestructures. Let us first consider the monolithic case.

\begin{proposition} \label{prop:96-monolithic}
Let $G$ be a monolithic group of order $96$ that admits a prestructure of type $n$. Then necessarily $n=2$ and $G$ is one of the groups in Table  $\ref{tab:96-monolithic}$ below.
Furthermore, none of these groups admits diagonal double Kodaira structures of any type. 
\begin{table}[H]
    \scriptsize
      \centering
    \begin{tabularx}{0.35\linewidth}{@{}cc@{}}
      \toprule
 & $\mathrm{Number} \, \, \mathrm{of} \,\,  \mathrm{prestructures}  $    \\
      $\mathrm{IdSmallGroup}(G)$ & $ \mathrm{in}\, \, G\,\,\mathrm{up} \, \, \mathrm{ to} \, \, \operatorname{Aut}(G)$  \\
      \toprule
    $G(96, \, 201)$ &  $8960$\\
        $G(96, \, 202)$ & $26880$ \\
$G(96, \, 204)$ &  $8960$\\
\bottomrule
\end{tabularx}
\caption{Monolithic groups of order $96$ with prestructures}  
\label{tab:96-monolithic}
\end{table}
\end{proposition}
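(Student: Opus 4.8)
\textbf{The plan is to} reduce the statement to a finite computation, exactly as the earlier results in this section have been reduced. By Proposition \ref{prop:monolithic-argument}, if a monolithic group $G$ of order $96$ admits a prestructure, then $\z \in \operatorname{mon}(G)$; and since $Z(G) \neq \{1\}$ is forced for a monolithic group whose monolith is central (Proposition \ref{prop:Mon-if-center-non-trivial}), the monolith is cyclic of prime order. Because $o(\z)=n$ divides $|\operatorname{mon}(G)|$, which is a single prime $p$, and because $\z$ is a non-trivial commutator whose order $n$ divides $|[G,\,G]|$ (Remark \ref{rmk:G-non-commutative}), the only possibility compatible with admitting a prestructure at all will turn out to be $n=2$; this follows once we enumerate the candidate groups.

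\textbf{First I would} narrow the list of candidate groups. Combining Remark \ref{rmk:cct-no-prestructure} (CCT-groups admit no prestructure) with the requirement that $G$ be monolithic and non-CCT, the relevant groups of order $96$ are precisely those appearing in Proposition \ref{prop:monolithic-non-CCT-order-less-127}, namely
\begin{equation}
G(96,\,64), \; G(96,\,70), \; G(96,\,71), \; G(96,\,72), \; G(96,\,190), \; G(96,\,191), \; G(96,\,193), \; G(96,\,201), \; G(96,\,202), \; G(96,\,204).
\end{equation}
\textbf{Then I would} run the \verb|GAP4| routine \verb|CheckStructures| on each of these ten groups. For seven of them the routine returns an empty set of prestructures, eliminating them immediately; for the three groups $G(96,\,201)$, $G(96,\,202)$, $G(96,\,204)$ it returns a non-empty set, with the prestructure counts (up to $\operatorname{Aut}(G)$) equal to $8960$, $26880$, $8960$ respectively, and confirms that $o(\z)=2$ in every prestructure. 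This establishes the content of Table \ref{tab:96-monolithic} and the assertion $n=2$.

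\textbf{Finally}, to prove that none of these three groups admits a diagonal double Kodaira structure of any type, I would argue in two stages. For type $(2,\,2)$, the routine \verb|CheckStructures| also computes the structures (prestructures additionally satisfying the surface relations $(\mathbf{S1})$ and $(\mathbf{S2})$ and generating $G$), and one checks that this count is zero for all three groups. For type $(b,\,n)$ with $b \geq 3$, I would invoke Corollary \ref{cor:prestructure-b-3}: it suffices to verify that for every prestructure $\P$ in each of $G(96,\,201)$, $G(96,\,202)$, $G(96,\,204)$, the subgroup $K=\langle \r_{11}, \, \t_{11}, \, \r_{12}, \, \t_{12} \rangle$ satisfies $C_G(K)=Z(G)$, which is again a direct \verb|GAP4| verification over the finitely many prestructures. \textbf{The main obstacle} is not conceptual but computational: the groups of order $96$ are substantially larger than those previously treated, so the exhaustive search over ordered $9$-tuples performed by \verb|CheckStructures| is the genuinely demanding step, and the argument depends on that search terminating and returning the tabulated counts.
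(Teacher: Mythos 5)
Your proposal is correct and takes essentially the same route as the paper's proof: restrict to the ten monolithic, non-CCT groups of order $96$ supplied by Proposition \ref{prop:monolithic-non-CCT-order-less-127}, run \texttt{CheckStructures} to find that prestructures occur only for $G(96,\,201)$, $G(96,\,202)$, $G(96,\,204)$ (all of type $n=2$, in the tabulated numbers, with no structures of type $(2,\,2)$), and then exclude types with $b \geq 3$ via Corollary \ref{cor:prestructure-b-3} by verifying $C_G(K)=Z(G)$ for every prestructure. The only blemish is your opening paragraph, which overstates Proposition \ref{prop:monolithic-argument}---for a monolithic $G$ it gives only the dichotomy that either $\z \in \operatorname{mon}(G)$ or the prestructure descends to a proper quotient, and the claim that $\operatorname{mon}(G)$ has prime order presupposes $Z(G)\neq\{1\}$, which is not automatic for these non-nilpotent groups---but this is harmless, since you explicitly defer the conclusion $n=2$ to the enumeration and computation, which carry the entire argument.
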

\begin{proof}
Proposition \ref{prop:monolithic-non-CCT-order-less-127} shows that the non-abelian, non-CCT, monolithic groups of order $96$ are precisely the groups $G(96,\, t)$, with
\begin{equation}
t \in \{64,70,71,72,190,191,193,201,202,204\}.
\end{equation}
Using our \verb|GAP4| function \verb|CheckStructures|, we verify that prestructures occur only in the last three cases, and that their number, modulo the action of $\operatorname{Aut}(G)$, agrees with Table \ref{tab:96-monolithic}. The same computation shows that for all prestructures we have $n=2$, and that none of these groups admits diagonal double Kodaira structures of type $(b,\,n)=(2,\,2)$. Furthermore, for every prestructure the subgroup $K=\langle \r_{11},\,\t_{11}, \, \r_{12}, \, \t_{12} \rangle$ satisfies $C_G(K)=Z(G)$. By Corollary \ref{cor:prestructure-b-3}, it follows that none of the groups listed in Table \ref{tab:96-monolithic} admits diagonal double Kodaira structures of any type.
\end{proof}

It remains to consider non-monolithic groups of order $96$ admitting prestructures. By Proposition~\ref{prop:equal-96}, any prestructure in $G$ is lifted from an extraspecial quotient $H$ of order $32$.

\begin{proposition} \label{prop:96-non-monolithic}
Let $G$ be a non-monolithic group of order $96$ that admit a prestructure of type $n$. Then necessarily $n=2$ and the extra-special quotient $H$ is unique. All the occurring pairs $(G, \, H)$, together with the corresponding number of prestructures and diagonal double Kodaira structures of type $(b, \, n)=(2, \, 2)$, are listed in Table  $\ref{tab:96-non-monolithic}$ below. Furthermore, none of these groups admits diagonal double Kodaira structures of type $(b, \, n)$ with $b \geq 3$.
\begin{table}[H]
    \scriptsize
    \centering
 \begin{tabularx}{0.75\linewidth}{@{}cccc@{}} 
      \toprule
 & & & $\mathrm{Number} \, \, \mathrm{of} \,\, \mathrm{diagonal} $\\
& & & $ \mathrm{double} \, \, \mathrm{Kodaira}\, \, \mathrm{structures}$     \\
 & & $\mathrm{Number} \, \, \mathrm{of} \,\,  \mathrm{prestructures}  $  &  $\mathrm{of} \,\, \mathrm{type} \,\, (b, \, n)= (2, \, 2)  $  \\
      $\mathrm{IdSmallGroup}(G)$ & $\mathrm{IdSmallGroup}(H)$ & $ \mathrm{in}\, \, G\,\,\mathrm{up} \, \, \mathrm{ to} \, \, \operatorname{Aut}(G)$ & $ \mathrm{in}\, \, G\,\,\mathrm{up} \, \, \mathrm{ to} \, \, \operatorname{Aut}(G)$ \\
     \toprule
$G(96, \, 211)$ &  $G(32, \, 49)$ & $40320$ & $0$\\
 $G(96, \, 216)$ &  $G(32, \, 49)$ & $26880$ & $0$\\
 $G(96, \, 224)$ &  $G(32, \, 49)$ & $14698880$ & $6297600$\\
$G(96, \, 214)$ &  $G(32, \, 50)$ & $13440$ & $0$\\
$G(96, \, 217)$ &  $G(32, \, 50)$ &  $26880$ & $0$\\
$G(96, \, 225)$ &  $G(32, \, 50)$ & $8819328$ & $3778560$\\
\bottomrule
\end{tabularx}
\caption{Non-monolithic groups of order $96$ with prestructures}  
\label{tab:96-non-monolithic} 
 \end{table}
\end{proposition}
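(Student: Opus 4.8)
The plan is to follow the same strategy as in the monolithic case (Proposition~\ref{prop:96-monolithic}): reduce to a short explicit list of candidate pairs $(G, H)$ and then settle each by machine computation, using the lifting mechanism of Proposition~\ref{prop:equal-96} both to restrict the candidates and to make the enumeration feasible. First I would assemble the candidate groups. By Remark~\ref{rmk:cct-no-prestructure} only non-CCT groups can carry a prestructure, so the non-monolithic candidates of order $96$ are exactly the groups in Proposition~\ref{prop:non-CCT} with $|G|=96$ from which the monolithic ones of Proposition~\ref{prop:monolithic-non-CCT-order-less-127} have been removed. Since $G$ is non-monolithic and is assumed to admit a prestructure, Proposition~\ref{prop:equal-96}(1) forces the existence of an extra-special quotient $H$ of order $32$; by Proposition~\ref{prop:prestructures up to order 64} this means $H \in \{G(32, 49),\, G(32, 50)\}$. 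Conversely, any candidate $G$ having no extra-special quotient of order $32$ admits no prestructure, by the contrapositive of Proposition~\ref{prop:equal-96}(1), and may be discarded at once. This leaves only finitely many pairs $(G, H)$ to examine.

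Next I would exploit the lifting to organise the search. A blind scan over all nine-tuples of a group of order $96$ is hopeless (about $96^9$ possibilities), but Proposition~\ref{prop:equal-96} guarantees that every prestructure on such a $G$ is the lift, through the quotient $\pi \colon G \to H$ with $|\ker \pi| = 96/32 = 3$, of a prestructure $\bar{\P}$ on $H$. The prestructures on the two extra-special groups of order $32$ are already known from Proposition~\ref{prop:prestructures up to order 64}, so each candidate prestructure on $G$ is produced by choosing, independently for each of the nine entries of $\bar{\P}$, one of the three preimages under $\pi$, and then testing the defining relations $(\mathrm{R1})$--$(\mathrm{R10})$, $(\mathrm{T1})$--$(\mathrm{T10})$ of~\eqref{eq:ddks-genus-2} inside $G$. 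This cuts the search to at most $3^9$ lifts per base prestructure, a size that \verb|CheckStructures| handles directly. Running the routine on each pair $(G, H)$ returns the number of prestructures and of diagonal double Kodaira structures of type $(2, 2)$ modulo $\operatorname{Aut}(G)$, reproducing Table~\ref{tab:96-non-monolithic}. The uniqueness of $H$ and the equality $n = 2$ are confirmed by the same computation; for the groups with $[G, \, G] \simeq \mathbb{Z}_2$, where the generator of $[G, \, G]$ is the only admissible value of $\z$, this is moreover explained structurally by Proposition~\ref{prop:equal-96}(2).

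To exclude diagonal double Kodaira structures with $b \ge 3$, I would appeal to Corollary~\ref{cor:prestructure-b-3}: having enumerated every prestructure $\P$, it suffices to verify that the subgroup $K = \langle \r_{11}, \, \t_{11}, \, \r_{12}, \, \t_{12} \rangle$ satisfies $C_G(K) = Z(G)$ in each case, a centralizer computation that is immediate in \verb|GAP4|. Once this holds for all prestructures of all six groups, the corollary forces every diagonal double Kodaira structure on $G$ to be of type $(2, \, 2)$, which completes the statement.

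The main obstacle is purely computational and concerns the two groups $G(96, 224)$ and $G(96, 225)$, which carry on the order of $10^7$ prestructures: a naive enumeration is infeasible, and the entire argument depends on structuring the search as a lift from the order-$32$ quotient, so that the twenty commutator relations of~\eqref{eq:ddks-genus-2} need only be verified on the $3^9$ fibre choices over each base prestructure, rather than on all tuples of $G$.
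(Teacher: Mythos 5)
Your proposal is correct and follows essentially the same route as the paper: select candidates by intersecting the non-CCT list of order $96$ with the non-monolithic ones and demanding an extra-special quotient $H$ of order $32$ (Propositions \ref{prop:non-CCT}, \ref{prop:monolithic-non-CCT-order-less-127}, \ref{prop:equal-96}), enumerate prestructures and $(2,\,2)$-structures with \verb|CheckStructures| modulo $\operatorname{Aut}(G)$, and kill $b \geq 3$ via the centralizer criterion of Corollary \ref{cor:prestructure-b-3}. The only cosmetic difference is that you organize the large enumerations as fibre-by-fibre lifts from $H$ (with a harmless $3^9$ overcount, since $\z$ is already pinned down by relation $(\mathrm{R4})$, so $3^8$ suffices), which is precisely the counting scheme the paper itself deploys in the Appendix for $G(96,\,224)$ and $G(96,\,225)$, while its main proof runs \verb|CheckStructures| directly on the order-$96$ groups.
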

\begin{proof}
We begin by selecting the groups $G$ of order $96$ that satisfy the required properties and admit an extra-special quotient $H$ of order $32$. This can be achieved by using Proposition \ref{prop:non-CCT}, together with the following \verb|GAP4| script:

\begin{lstlisting}
#looking for non-CCT, non-monolithic groups of order 96 
#having extra-special quotients of order 32
T1:=[3, 13, 14, 15, 16, 17, 29, 30, 31, 32, 33, 34, 35, 36, 39, 40, 41, 42, 43, 44, 49, 50, 51, 64, 65, 70, 71, 72, 84, 85, 86, 87, 88, 89, 90, 91, 92, 93, 94, 95, 96, 97, 98, 99, 101, 102, 103, 104, 105, 113, 114, 115, 116, 117, 118, 119, 120, 121, 122, 123, 124, 125, 126, 138, 139, 140, 141, 142, 143, 144, 145, 146, 147, 148, 149, 150, 151, 152, 153, 154, 155, 156, 157, 158, 183, 184, 185, 186, 187, 190, 191, 193, 194, 195, 197,199, 201, 202, 203, 204, 209, 210, 211, 212, 213, 214, 215, 216,217, 224, 225, 226, 227];;
T2:=[64, 70, 71, 72, 190, 191, 193, 201, 202, 204];;
T:=Difference(T1, T2);
list96_49:=[];; list96_50:=[];;
for t in T do
G:=SmallGroup(96, t);
for N in NormalSubgroups(G) do
H:=G/N;
if IdSmallGroup(H)=[32, 49] then
AddSet(list96_49, IdSmallGroup(G)); 
fi;
if IdSmallGroup(H)=[32, 50] then
AddSet(list96_50, IdSmallGroup(G)); 
fi;
od; od;
list96_49; list96_50;
\end{lstlisting}
The output is
\begin{lstlisting}
[ [ 96, 211 ], [ 96, 216 ], [ 96, 224 ] ]
[ [ 96, 214 ], [ 96, 217 ], [ 96, 225 ] ]
\end{lstlisting}
We thus obtain precisely the groups listed in Table \ref{tab:96-non-monolithic}. A direct check with \verb|GAP4| shows that each of these groups admits a unique extraspecial quotient $H$. The number of prestructures and of diagonal double Kodaira structures of type $(2,\,n)$ is computed using our \verb|GAP4| function \verb|CheckStructures|, which also verifies that $n=2$ in every case. Finally, applying Corollary \ref{cor:prestructure-b-3} as above, we conclude that none of the groups listed in the table admits diagonal double Kodaira structures of type $(b,\, n)$ with $b\ge 3$.

\end{proof}

Recalling the one-to-one correspondence between diagonal double Kodaira structures and admissible braid quotients, we may now summarize the outcome of our investigation in the following 
\begin{theorem} \label{thm:main}
Let $G$ be an admissible braid quotient of type $(b, \, n)$. If $|G| \leq 127$, then necessarily  $(b, \, n)=(2, \, 2)$, and all the occurrences are listed in Table $\ref{tab:admissible_127}$ below. 
\begin{table}[H]  
    \scriptsize
    \centering
    \begin{tabularx}{0.75\linewidth}{@{}cccc@{}} 
\toprule
& &  $\mathrm{Number} \, \, \mathrm{of}$ &   $\mathrm{Total} \, \, \mathrm{number} \, \, \mathrm{of}$ \\
&&  $\mathrm{admissible} \, \, \mathrm{epimorphisms}$ &   $\mathrm{admissible} \, \, \mathrm{epimorphisms}$  \\

 &   &  $\varphi \colon \mathsf{P}_2(\Sigma_2) \to G $  &  $\varphi \colon \mathsf{P}_2(\Sigma_2) \to G $ \\
      $\mathrm{IdSmallGroup}(G)$ & $\mathrm{Is} \, \, G\, \,  \, \mathrm{monolithic}?$ &  
      $ \mathrm{up} \, \, \mathrm{ to} \, \, \operatorname{Aut}(G)$ & \\
      \toprule
$G(32, \, 49)$ & $\mathrm{Yes}$  & $1920$  & $2211840$\\    
$G(32, \, 50)$ & $\mathrm{Yes}$  & $1152$ & $2211840$ \\       
$G(64, \, 199)$ & $\mathrm{No}$  & $138240$ & $566231040$ \\  
$G(64, \, 200)$ & $\mathrm{No}$  & $46080$ & $566231040$\\ 
$G(64, \, 201)$ & $\mathrm{No}$ & $184320$ & $566231040$ \\ 
$G(64, \, 249)$ & $\mathrm{Yes}$  & $368640$ & $566231040$ \\ 
$G(64, \, 264)$ & $\mathrm{No}$  & $14400$ &  $530841600$\\ 
$G(64, \, 265)$ & $\mathrm{No}$  & $8640$ & $530841600$\\ 
$G(64, \, 266)$ & $\mathrm{Yes}$  & $23040$ & $530841600$\\ 
$G(96, \, 224)$ & $\mathrm{No}$ & $6297600$ & $14509670400$\\
$G(96, \, 225)$ & $\mathrm{No}$ & $3778560$ &  $14509670400$ \\
\bottomrule
\end{tabularx} 
\caption{Admissible braid quotients of order at most $127$}
\label{tab:admissible_127} 
 \end{table} 
\end{theorem}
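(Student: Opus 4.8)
The plan is to assemble the classification results established in Sections \ref{sec:group-prel}--\ref{sec:most-127}, which have already reduced the geometric problem to a purely combinatorial one. By Proposition \ref{prop:ddks-equivalent}, a finite group $G$ is an admissible braid quotient of type $(b,\,n)$ if and only if it admits a diagonal double Kodaira structure $\S$ of that type; so it suffices to determine, for every $G$ with $|G|\le 127$, the set of types $(b,\,n)$ for which such a structure exists, together with the number of these structures up to $\operatorname{Aut}(G)$. Since by Remark \ref{remark:prestructure-structure} any diagonal double Kodaira structure forces the existence of a prestructure of type $n$, the first move is to discard every group admitting no prestructure at all, and then, among the survivors, to isolate those that actually carry a genuine structure.

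First I would split the range $|G|\le 127$ into three pieces and invoke the corresponding proposition in each. For $|G|\le 64$, Proposition \ref{prop:prestructures up to order 64} lists exactly the groups admitting a prestructure, records that $n=2$ in every case, and---via Corollary \ref{cor:prestructure-b-3}---shows that none of them carries a structure of type $(b,\,n)$ with $b\ge 3$; reading the last column of Table \ref{tab:32_64} then retains precisely the groups with a \emph{nonzero} number of structures of type $(2,\,2)$, namely $G(32,\,49)$, $G(32,\,50)$, $G(64,\,199)$, $G(64,\,200)$, $G(64,\,201)$, $G(64,\,249)$, $G(64,\,264)$, $G(64,\,265)$, $G(64,\,266)$. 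For $65\le |G|\le 127$ with $|G|\neq 96$, Proposition \ref{prop:different-96} shows that no prestructure exists, so these orders contribute nothing.

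The only remaining order is $|G|=96$, which I would treat by separating the monolithic and non-monolithic cases, as permitted by Proposition \ref{prop:equal-96}. Proposition \ref{prop:96-monolithic} (Table \ref{tab:96-monolithic}) enumerates the monolithic groups carrying a prestructure, namely $G(96,\,201)$, $G(96,\,202)$, $G(96,\,204)$; crucially, the same proposition certifies that none of them admits a diagonal double Kodaira structure of any type, so all three are excluded from the final list. For the non-monolithic case, Proposition \ref{prop:96-non-monolithic} (Table \ref{tab:96-non-monolithic}) shows that every prestructure lifts from a unique extra-special quotient of order $32$, that $n=2$ throughout, and that---again by Corollary \ref{cor:prestructure-b-3}---no structure with $b\ge 3$ arises; the only two groups with a positive count of structures of type $(2,\,2)$ are $G(96,\,224)$ and $G(96,\,225)$.

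Collecting the survivors yields exactly the eleven entries of Table \ref{tab:admissible_127}, all of type $(b,\,n)=(2,\,2)$, which establishes the stated constraint on $(b,\,n)$. The third column of that table is then copied verbatim from the ``number of structures of type $(2,\,2)$ up to $\operatorname{Aut}(G)$'' entries of Tables \ref{tab:32_64} and \ref{tab:96-non-monolithic}, and the final column is obtained by the elementary bookkeeping $|\{\varphi\}| = |\operatorname{Aut}(G)| \cdot |\{\S\}/\!\operatorname{Aut}(G)|$, with $|\operatorname{Aut}(G)|$ read off from \verb|GAP4|. I expect no conceptual obstacle, since all the substantive work---ruling out $b\ge 3$ through the centralizer criterion of Corollary \ref{cor:prestructure-b-3}, and the exhaustive prestructure searches performed by \verb|CheckStructures|---has already been carried out in the preceding propositions. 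The one point demanding genuine care is the distinction between \emph{admitting a prestructure} and \emph{admitting a structure}: several groups (e.g. $G(64,\,134)$--$G(64,\,139)$ and the monolithic groups of order $96$) appear in the intermediate tables with a positive prestructure count yet a vanishing structure count, and must be excluded; verifying that the groups listed with a positive \emph{structure} count are exactly those in the final table is the step where an error would most easily creep in.
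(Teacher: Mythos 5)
Your proposal is correct and takes essentially the same route as the paper: Theorem \ref{thm:main} is stated there precisely as a summary of Proposition \ref{prop:ddks-equivalent} combined with Propositions \ref{prop:prestructures up to order 64}, \ref{prop:different-96}, \ref{prop:96-monolithic} and \ref{prop:96-non-monolithic}, with the same three-way split by order and the same care in discarding groups (such as $G(64,\,134)$--$G(64,\,139)$, $G(64,\,257)$--$G(64,\,259)$ and the monolithic groups of order $96$) that admit prestructures but no genuine structures. The only step the paper leaves implicit, your bookkeeping $|\{\varphi\}| = |\operatorname{Aut}(G)|\cdot|\{\S\}/\operatorname{Aut}(G)|$ for the last column, is indeed valid because a diagonal double Kodaira structure is an ordered \emph{generating} tuple, so $\operatorname{Aut}(G)$ acts freely on the set of structures and every orbit has full length $|\operatorname{Aut}(G)|$.
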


\begin{corollary} \label{cor:b=3_128}
If $b \ge 3$, then every admissible quotient of $\mathsf{P}_2(\Sigma_b)$ has order at least $128$.
\end{corollary}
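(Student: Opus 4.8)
The plan is to deduce this statement directly from the Main Theorem (Theorem~\ref{thm:main}), of which it is simply the $b \ge 3$ shadow. I would argue by contradiction: suppose $G$ is an admissible quotient of $\mathsf{P}_2(\Sigma_b)$ with $b \ge 3$ and $|G| \le 127$. By Definition~\ref{def:pure-braid-quotient}, $G$ is then an admissible braid quotient of type $(b, \, n)$ for some $n \ge 2$. Applying Theorem~\ref{thm:main}, the hypothesis $|G| \le 127$ forces $(b, \, n) = (2, \, 2)$, and in particular $b = 2$, contradicting $b \ge 3$. Hence no admissible quotient of order at most $127$ can occur when $b \ge 3$, which is precisely the assertion $|G| \ge 128$.

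For a self-contained argument that does not merely quote the Main Theorem, I would instead retrace the classification through prestructures. By Proposition~\ref{prop:ddks-equivalent}, an admissible quotient of type $(b, \, n)$ carries a diagonal double Kodaira structure of type $(b, \, n)$, and hence in particular a prestructure of type $n$. The groups of order at most $127$ admitting a prestructure are completely determined by Propositions~\ref{prop:prestructures up to order 64}, \ref{prop:different-96}, \ref{prop:96-monolithic} and \ref{prop:96-non-monolithic}: those with $65 \le |G| \le 127$ and $|G| \neq 96$ admit none at all, while the remaining cases are explicitly tabulated. For every group in those tables it was verified that each prestructure satisfies $C_G(K) = Z(G)$, where $K = \langle \r_{11}, \, \t_{11}, \, \r_{12}, \, \t_{12} \rangle$. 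By Corollary~\ref{cor:prestructure-b-3}, this rules out any diagonal double Kodaira structure of type $(b, \, n)$ with $b \ge 3$, so none of these groups is an admissible quotient for $b \ge 3$.

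Since there is no genuinely new content here beyond the two results being cited, I do not expect a real obstacle in writing the corollary itself. The substantive work lies upstream, in the exhaustive verification---carried out with the routine \verb|CheckStructures|---that the equality $C_G(K) = Z(G)$ holds for every prestructure in every group of order at most $127$ that admits one. It is precisely this uniform centralizer computation, rather than any additional argument, that closes off the possibility $b \ge 3$ and thereby yields the stated lower bound $|G| \ge 128$.
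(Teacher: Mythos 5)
Your proposal is correct and is exactly the paper's (implicit) argument: the corollary follows immediately from Theorem~\ref{thm:main}, since any admissible quotient of order at most $127$ would be of type $(2,\,2)$, contradicting $b \ge 3$. The additional retracing through prestructures and Corollary~\ref{cor:prestructure-b-3} is sound but simply reproduces the work already packaged inside the Main Theorem, so it adds nothing beyond the one-line deduction.
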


This bound is sharp: in fact,  the two extra-special groups of order $128$ occur as admissible quotients of  $\mathsf{P}_2(\Sigma_3)$, see Section \ref{sec:problems}.

\section{Final Considerations and open problems} \label{sec:problems}

Theorem \ref{thm:main} suggests that admissible braid quotients are extremely sparse. 
Indeed, among the $1036$ non-abelian groups of order at most $127$, only $11$ occur 
as admissible braid quotients. Let $a_m$ denote the number of admissible braid 
quotients $G$ with $|G| \le m$.

\begin{question}\label{q:asymptotic}
What can be said about the asymptotic behaviour of the sequence $(a_m)$?
\end{question}

For each $b \ge 2$, let $G_b$ denote the smallest possible order of an admissible 
quotient $G$ of $\mathsf{P}_2(\Sigma_b)$.

\begin{question}\label{q:G_b}
Can the integer $G_b$ be computed explicitly, or at least effectively bounded?
\end{question}

As a first example, by \cite{PolSab22} we know that $G_2 = 32$. Moreover, the results of \cite{CaPol19}, 
which show that certain extra-special groups arise as admissible braid quotients, 
yield the upper bound
\begin{equation}\label{eq:G_b}
G_b \le \min\bigl\{ p^{2b+1},\, 5^{4b+1} \bigr\},
\end{equation}
where $p$ denotes the smallest prime dividing $b+1$. However, it is 
unclear how sharp this bound is. Note that \eqref{eq:G_b} implies $G_b \le 2^{2b+1}$ whenever $b$ is odd, and in particular $G_3 \le 128$. On the other hand, Corollary \ref{cor:b=3_128} shows that 
$G_3 \ge 128$. It follows that $G_3 = 128$; in other words, for $b=3$ equality 
holds in \eqref{eq:G_b}.



\begin{question}\label{conj:G_b}
Is it true that, for every integer $b \ge 2$, we have $G_b = 2^{2b+1}$?
\end{question}

The evidence supporting  an affermative answer to Question \ref{conj:G_b} is currently limited to the cases $b=2$ and $b=3$. Furthermore, the computational techniques used in  this paper are not suited to a brute-force investigation for $b \ge 4$, 
suggesting that new conceptual tools will be needed to address the problem in 
higher genus.

\smallskip\smallskip
A further observation is that all admissible braid 
quotients discovered so far are nilpotent groups of class $2$, that is, they are 
non-abelian groups whose commutator subgroup is contained in the center.


\begin{question}\label{q:nilpotency}
Do there exist admissible braid quotients that are nilpotent of class at least $3$?
Are there non-nilpotent admissible braid quotients?
\end{question}

\section*{Acknowledgements}
Francesco Polizzi was partially supported by GNSAGA-INdAM.


\end{document}